\documentclass{article}
\usepackage{amssymb}
\usepackage{amsmath}
\usepackage{amsthm}

\usepackage{amsthm}

\newtheoremstyle{boldremark}
  {\topsep}    
  {\topsep}    
  {\normalfont}  
  {}           
  {\bfseries}  
  {.}          
  {0.5em}      
  {}           

\theoremstyle{boldremark}

\usepackage{amsmath, amsthm}
\newtheorem{theorem}{Theorem}[section] 
\newtheorem{proposition}{Proposition}[section]
\newtheorem{lemma}[theorem]{Lemma}
\newtheorem{corollary}[theorem]{Corollary}

\newtheorem{remark}[theorem]{Remark}

\bibliographystyle{plain}
\usepackage[english]{babel}

\usepackage[letterpaper,top=2cm,bottom=2cm,left=3cm,right=3cm,marginparwidth=1.75cm]{geometry}

\renewcommand\[{\begin{equation}}
\renewcommand\]{\end{equation}}
\allowdisplaybreaks[4]
\usepackage{amsmath}
\usepackage{comment}
\usepackage{graphicx}
\usepackage[colorlinks=true, allcolors=blue]{hyperref}

  \theoremstyle{definition}

  \numberwithin{equation}{section}

\begin{document}
\title{$H$-harmonic reproducing kernels on the ball}

\author{Matěj Moravík}
\maketitle

\begin{abstract}
We consider the Szegő reproducing kernel associated with the space of $H$-harmonic functions on the unit ball in n-dimensional space, i.e. functions that are characterized by being annihilated by the hyperbolic Laplacian. This paper derives an explicit series expansion for the reproducing kernel in terms of a triple hypergeometric function introduced of Exton. Moreover, we demonstrate that the Szeg\H{o} kernel admits a representation as a finite sum of hypergeometric functions. We further show that the Szeg\H{o} kernel, for linearly dependent arguments, can be expressed in terms of the first Appell hypergeometric function. In addition we provide a series expansion for the weighted Bergman kernels.
\end{abstract}

\section{Introduction}
Let $B^n$ be the open unit ball in $\mathbb{R}^n$, $n > 2$. The orthogonal transformations
\begin{equation*}
    x \mapsto Ux \quad x \in \mathbb{R},\; U \in O(n),
\end{equation*}
maps $B^n$ and its boundary $\partial B^n$(unit sphere) onto themselves, and so do Moebius transformations
\begin{equation*}
    \phi_a(x) := \frac{a|x-a| + (1-|a|^2)(x-a)}{1 - 2\langle x, a \rangle + |x|^2|a|^2}
\end{equation*}
iterchanging origin $\textbf{0} \in B^n$ with point $a \in B^n$; here $\langle x,y\rangle$ denotes the usual scalar product.\\
The group $G$ generated by the $\phi_a,\;a \in B^n,$ and $U \in O(n)$ is called the Moebius group of $B^n$.
Let $\mu$ denote the normalized Lebesgue measure on $B^n$ \emph{i.e.} $\mu(B^n) = 1$, similarly $\sigma$ will denote the surface measure normalized so that $\sigma(\partial B^n) = 1.$ We define the Moebius-invariant measure $\rho$ on $B^n$ as follows
\begin{equation} \label{me1}
    d\rho(x) = \frac{d\mu(x)}{(1-|x|^2)^n}.
\end{equation} 
The unique hyperbolic metric (up to a constant factor) is then given by \begin{equation*}
    ds = \frac{|dx|}{1-|x|^2}.
\end{equation*}
The hyperbolic Laplacian is then defined as \begin{equation} \label{eq1}
    \Delta_hf(x) := \Delta(f \:\circ \: \phi_x0), \quad f\in C^2(B^n), \: x\in B^n;
\end{equation}
it follows from a direct calculation that  \begin{equation*}
    \Delta_h f(x) = (1-|x|^2)[(1- |x|^2)\Delta f(x) + 2(n-1)\langle x,\nabla f(x) \rangle ],
\end{equation*}
where $\Delta f =  \frac{\partial^2 f}{\partial x_1^2} + ...+ \frac{\partial^2 f}{\partial x_n^2}$ and $\nabla f = \big(\frac{\partial f}{\partial x_1} , ...,\frac{\partial f}{\partial x_n}\big)$.
Functions on $B^n$ annihilated by $\Delta_h$ are called hyperbolic harmonic, or $H$-harmonic for short. The vector space of all $H$-harmonic functions on the unit ball $B^n$ will be denoted by $\mathcal{H}.$ When $f \in \mathcal{H},$ then $f(x)$ equals its mean value with respect to the measure $\rho$ over any ball in $B^n$ with centre at $x$ (see~\cite[Corollary 4.1.3]{Stoll2016}).\\We can now consider the Hardy space \( \mathcal{H}^p_h(B^n) \) consisting of \( H \)-harmonic functions on \( B^n\) satisfying
$$
\|f\|_{\mathcal{H}^p} := \sup_{0 < r < 1} \left( \int_{\partial B^n} |f(r\eta)|^p \, d\sigma(\eta) \right)^{1/p} < \infty.
$$
It is well known that the space \( \mathcal{H}^2_h(B^n) \) is a reproducing kernel Hilbert space, thus we can consider its reproducing kernel, called the \textit{Szeg\H{o} kernel} $K_h(x, y)$, defined on $B^n \times B^n$, which is $H$-harmonic in both variables and satisfies the reproducing property
\begin{equation*}
    f(x) = \lim_{r\rightarrow1}\int_{\partial B^n} f(r\eta)\, K_h(x, r\eta)\, d\sigma(\eta), \quad \text{for all } x \in B^n \text{ and } f \in \mathcal{H}^2_h(B^n).
\end{equation*}
More generally, for any $s > -1$, one can consider $\textit{weighted $H$-harmonic Bergman space}$
\begin{equation*}
    \mathcal{H}_s := \{f \in L^2(B^n, d\rho_s), \quad f \text{ is $H$-harmonic} \},
\end{equation*}
where $$d\rho_s(x) = \frac{\Gamma(\frac{n}{2}+s+1)}{\pi^{\frac{n}{2}}\Gamma(s+1)}(1-|x|^2)^sd\mu(x),$$ and its reproducing kernel $K_h^{s}(x,y).$\\
Suppose that \( f \) belongs to $\mathcal{H}^p_h(B^n)$,
then  following representation holds for every \( x \in B^n \):
\[
f(x) = \lim_{r\rightarrow1}\int_{\partial B^n} f(r\eta)\, P_h(\eta, x)\, d\sigma(\eta),
\]
where \( P_h(\eta, x) \) denotes the hyperbolic Poisson kernel, defined on \( \partial B^n \times B^n \). It was shown in \cite[Chapter 5]{Stoll2016} that the Poisson kernel \(P_h(\eta, x)\) admits the explicit formula: 
$$
    P_h(\eta, x) = \frac{(1 - |x|^2)^{n-1}}{|x - \eta|^{2(n-1)}},
$$
for \(\eta \in \partial B^n\) and \(x \in B^n\).
In addition, it follows by a classical argument that the Szegő kernel \(K_h(x, y)\) is equal to 
\begin{equation} \label{poisson}
 K_h(x, y) = \int_{\partial B^n} P_h(\eta, x) P_h(\eta, y)\, d\sigma(\eta),
\end{equation}
for all \(x, y \in B^n\) (see~\cite[Theorem 2.1]{Stoll2019} for details).
Thus, \(K_h(x, y)\) can be viewed as an extension of the Poisson kernel from \(\partial B^n \times B^n\) to \(B^n \times B^n\).
The reproducing kernel $K_h(x,y)$ on the line $y = \lambda x$, specially in the case of \(\lambda = 1\), is of particular interest.
The function \(K_h(x,x)\) is of fundamental importance due to its role in estimating the pointwise values of functions in \(\mathcal{H}^2_h(B^n)\). To see this, for any \(f \in \mathcal{H}^2_h(B^n)\), we have:
\begin{equation}\label{78}
    |f(x)| = \left|\lim_{r\rightarrow1}\int_{\partial B^n} f(r\eta)\, K_h(x,r\eta)\, d\mu(\eta)\right|
    \leq \sqrt{K_h(x,x)}\, \|f\|_{\mathcal{H}^2_h(B^n)},
\end{equation}
where \eqref{78} follows from the Cauchy--Schwarz inequality and reproducing property of the Szegő kernel.
It was shown in (see~\cite[Theorem~3.1]{Stoll2019}) that in even dimensions $K_{h}(x,y)$ admits a closed-form representation, concretely:\\
\begin{equation*}
K_{h}(x, y) =
\frac{Q_n(x, y)}
     {\left(1 - 2\langle x, y \rangle + |x|^2|y|^2\right)^{\frac{3n}{2}-2}},
\end{equation*}
where $Q_n(x,y)$ is a symmetric polynomial.\\

In ~\cite[Theorem~5.3]{Stoll2019} it was also established that the Hilbert space $\mathcal{H}_s$ possesses a reproducing kernel that can be written as
$$
K_h^s(x, y) = \sum_{m=0}^\infty \frac{S_m(|x|^2)\, S_m(|y|^2)}{I_m(s)}\, Z_m(x, y),
$$
where the function $S_m(t)$ is defined by
$$
S_m(t) := \frac{(n-1)_m}{\left(\frac{n}{2}\right)_m} \, {}_2F_1\left(\begin{matrix} m,\, 1 - \frac{n}{2} \\ m + \frac{n}{2} \end{matrix} ;\, t \right),
$$
and $I_m(s)$ is given by
$$
I_m(s) := \frac{\Gamma\left(\frac{n}{2}+s+1\right) }{\Gamma\left(\frac{n}{2}\right) \Gamma(s + 1)} \int_0^1 t^{m + \frac{n}{2} - 1} (1 - t)^s S_m(t)^2 \, dt.
$$
The coefficients $I_m(s)$ were heavily studied in \cite{Ureyen2023}.\\
Here, \(Z_m(x, y)\) denotes the zonal harmonic of degree \(m\), and $(x)_m=x(x+1)\dots(x+m-1)$ is the usual Pochhammer symbol.\\

The main objective of this paper is to obtain an explicit formula for the \(H\)-harmonic Szeg\H{o} kernel \(K_h(x, y)\), expressed in terms of a certain hypergeometric series. Moreover, we show that \(K_h(x, y)\) can be written as a finite sum of hypergeometric functions.

In addition, we provide a representation of \(K_h(x, y)\) for the case of linearly dependent arguments, expressed in terms of the first Appell hypergeometric function.

Finally, we conclude by deriving the series expansion for the weighted Bergman kernel \(K_h^{s}(x, y)\).

Our main results are the following.
\medskip

\noindent
\textbf{Theorem} (\textit{Theorem~\ref{Pr1}}). Let $\alpha > 1$, $\beta > 1$  and $n \geq 2$ then for $x,\:y\in B^n$
\begin{equation}\label{prek}
    \int_{\partial B^n}\frac{1}{|x-\eta|^{2(\alpha-1)}}\frac{1}{|y-\eta|^{2(\beta-1)}} d\sigma(\eta) = \frac{1}{(1+|x|^2)^{\alpha-1}(1+|y|^2)^{\beta-1}}X_9(\alpha-1,\beta-1;\frac{n}{2};X,Y,Z),
\end{equation}
where 
\[X = \left(\frac{|x|}{1+|x|^2}\right)^2,\quad Y =\frac{2\langle x,y\rangle}{(1+|y|^2)(1+|x|^2)}, \quad Z = \left(\frac{|y|}{1+|y|^2}\right)^2
\]
and $X_9$ denotes Exton's triple hypergeometric series defined as
$$
X_9(\alpha, \beta;\, \gamma;\, x, y, z)
    := \sum_{p=0}^{\infty}\sum_{q=0}^{\infty}\sum_{l=0}^{\infty}
      \frac{(\alpha)_{2p+q}\, (\beta)_{2l+q}}
           {(\gamma)_{p+q+l}\; p! \; q! \; l!}
      \, x^p\, y^q\, z^l.
$$
\medskip
\noindent
\textbf{Corollary} (\textit{Corollary~\ref{cor:szego_kernel}}). 
For $x,y\in B^n$,
  \begin{equation}\label{eq:szego_kernel_explicit}
    K_h(x,y)
    = \frac{(1-\lvert x\rvert^2)^{n-1}(1-\lvert y\rvert^2)^{n-1}}
           {(1+\lvert x\rvert^2)^{n-1}(1+\lvert y\rvert^2)^{n-1}}
      X_9\!\bigl(n-1,n-1;\tfrac n2;X,Y,Z\bigr),
  \end{equation}
  where
$$
    X = \left(\frac{|x|}{1+|x|^2}\right)^2,\quad
    Y = \frac{2\langle x,y\rangle}{(1+\lvert x\rvert^2)\,(1+\lvert y\rvert^2)},\quad
    Z = \left(\frac{|y|}{1+|y|^2}\right)^2.
$$
\medskip
\noindent
\textbf{Theorem} (\textit{Theorem~\ref{theorem necooo}}). 
For \( x,\: y \in B^n \),
\begin{align}
K_h(x,y) 
&= 
\left(
    1 + 
    \frac{
        |x-y|^2
    }{
        (1 - |x|^{2})(1 - |y|^{2})
    }
\right)^{\!2 - \tfrac{3n}{2}}
\\[4pt]\notag
&\quad \times
\sum_{p,l=0}^{\,n-1}
\frac{(1-n)_{p+l}}{p!\,l!}\,
\frac{(n-1)_p\,(n-1)_l}{\left(\tfrac{n}{2}\right)_{p+l}}
\left( \frac{|x|^{2}}{|x|^{2}-1} \right)^{\!p}
\left( \frac{|y|^{2}}{|y|^{2}-1} \right)^{\!l}
\\[4pt]\notag
&\quad \times
{}_2F_1\!\left(
\begin{matrix}
1 + p - \tfrac{n}{2},\; 1 + l - \tfrac{n}{2} \\[4pt]
\tfrac{n}{2} + p + l
\end{matrix}
;\;
\frac{
    -|x-y|^2
}{
    (1 - |x|^{2})(1 - |y|^{2})
}
\right),
\end{align}
where ${}_2F_1$ is Gauss' hypergeometric function.
\medskip

\noindent
\textbf{Corollary} (\textit{Corollary~\ref{4.1}}). 
  Let \(x, \:y\in B^n\) with $y = \lambda x$ and \(0\leq \lambda \leq 1\). Then
  \[
    K_h(x,y)
    =\frac{(1-|x|^2)^{n-1}\,(1-|y|^2)^{n-1}}
          {(1+|x|)^{2(n-1)}\,(1+|y|)^{2(n-1)}}
     \,
     F_1\!\left(
\begin{matrix}
\frac{n - 1}{2};\; n - 1,\; n - 1 \\
n - 1
\end{matrix}
;\; \frac{4|x|}{(1 + |x|)^2},\; \frac{4|y|}{(1 + |y|)^2}
\right),
  \]
where $F_1$ is the first Appell hypergeometric function.
\medskip

\noindent

\noindent

\noindent
\textbf{Theorem} (\textit{Theorem~\ref{teylorbergman}}). 
  Let $x, y \in B^n$. Then
\begin{align}
K_h^{s}(x, y) = &\frac{(1-|x|^2)^{n-1}(1-|y|^2)^{n-1}}{(1+|x|^2)^{n-1}(1+|y|^2)^{n-1}}
\sum_{\gamma=0}^\infty\sum_{\alpha=0}^\infty\sum_{\beta=0}^\infty\mathcal{A_{\alpha, \,\beta, \,\gamma}}\:
X^{\alpha}\cdot
Y^{\gamma}\cdot
Z^{\beta},
\end{align}
 where
$$
    X = \left(\frac{\lvert x\rvert}{1+\lvert x\rvert^2}\right)^2,\quad
    Y = \frac{2\langle x,y\rangle}{(1+\lvert x\rvert^2)\,(1+\lvert y\rvert^2)},\quad
    Z = \left(\frac{\lvert y\rvert}{1+\lvert y\rvert^2}\right)^2
$$
and 
\begin{align*}
\mathcal{A_{\alpha, \,\beta, \,\gamma}} &:= \sum_{j=0}^{\min(\alpha,\beta)}
\frac{(-1)^j\big(\frac{n-2}{2}\big)_{\gamma+j}\,[n-2+2(\gamma+2j)]}
     {I_{\gamma+2j}(s)\,(n-2)\,j!\,\gamma!}\\
&\quad\times
\frac{(n-1)_{2\alpha+\gamma}(-\alpha)_j}{(\frac{n}{2})_{\alpha+\gamma+j}\,\alpha!}
\frac{(n-1)_{2\beta+\gamma}(-\beta)_j}{(\frac{n}{2})_{\beta+\gamma+j}\,\beta!}.
\end{align*}
\section{Notation and preliminaries}
Throughout this paper, $_2{}F_1$ will denote the (Gauss) hypergeometric function defined as 
$$
{}_2F_1\left(\begin{matrix} a,\, b \\[2pt] c \end{matrix};\, z \right)
= \sum_{m=0}^\infty \frac{(a)_m\, (b)_m}{(c)_m\, m!}\, z^m,
$$
and more generally, the generalized hypergeometric function $_pF_q$ will be denoted by
$$
{}_pF_q\left(\begin{matrix} a_1,\, a_2,\, \ldots,\, a_p \\[2pt] b_1,\, b_2,\, \ldots,\, b_q \end{matrix};\, z \right)
= \sum_{m=0}^\infty \frac{(a_1)_m\, (a_2)_m\, \cdots\, (a_p)_m}{(b_1)_m\, (b_2)_m\, \cdots\, (b_q)_m\, m!}\, z^m,
$$
where $a_1, \ldots, a_p$, $b_1, \ldots, b_q$, and $z$ are real numbers, with the restriction that none of the $b_j$ are zero or negative integers. The 
Pochhammer symbol $(a)_m$ is defined as
$$
(a)_m = a(a+1)\cdots(a+m-1) = \frac{\Gamma(a+m)}{\Gamma(a)}.
$$
We will also encounter two-variable analogues of hypergeometric functions. In particular, we will require the notion of the first Appell hypergeometric function:
$$
F_1\left(\begin{matrix} a;\, b_1,\, b_2 \\ c \end{matrix};\, x,\, y \right)
= \sum_{m=0}^\infty\sum_{l=0}^\infty \frac{(a)_{m+l}\, (b_1)_m\, (b_2)_l}{(c)_{m+l}\, m!\, l!}\, x^m\, y^l,
$$
convergent absolutely for $\max\{|x|,\,|y|\} < 1.$ \\
Our notation largely follows standard conventions, in particular those found in~\cite{BatemanHTF}.\\
We will also require the notion of Exton's triple hypergeometric function, denoted in Exton's own terminology by $X_9$, which we had already mentioned in the previous section. It is defined by the following triple series:
$$
X_9(\alpha, \beta;\, \gamma;\, x, y, z)
    := \sum_{p=0}^{\infty}\sum_{q=0}^{\infty}\sum_{l=0}^{\infty}
      \frac{(\alpha)_{2p+q}\, (\beta)_{2l+q}}
           {(\gamma)_{p+q+l}\; p! \; q! \; l!}
      \, x^p\, y^q\, z^l,
$$
which converges absolutely for
$|x| < \tfrac{1}{4}$, $|z| < \tfrac{1}{4}$, and $|y| < \tfrac{1}{2} + \sqrt{(1 - 4|x|)(1 - 4|z|)}$.
See~\cite{Exton1982} for more details.
We shall appeal to various properties of hypergeometric functions as we proceed.

 Zonal harmonics are homogeneous, symmetric,  polynomials of total degree \(m\). Moreover, \(Z_m(x, y)\) serves as the reproducing kernel for the space of spherical harmonics of degree \(m\), \emph{i.e.}, the restrictions to the sphere \(\partial{B}^n\) of homogeneous harmonic polynomials in \(\mathbb{R}^n\). That is, for any  \(p \in \mathcal{H}_m(\partial{B}^n)\), we have
$$
p(x) = \int_{\partial{B}^{n}} Z_m(x, \eta)\, {p}(\eta)\, d\sigma(\eta),
$$
where \(\mathcal{H}_m(\partial{B}^n)\) denotes the space of spherical harmonics of degree \(m\). For more information on $Z_m(x,y)$ see~\cite[Chapter 5]{AxlerBourdonRamey}.

When both $x$ and $y$ lie on the unit sphere, the zonal harmonics are given by the Gegenbauer polynomials evaluated at the inner product:
\begin{equation} \label{zonal}
    Z_m(x, y) = \frac{m+\lambda}{\lambda}C_m^{\lambda}(\langle x, y\rangle), 
\end{equation}
where $C_m^{\lambda}$ is the Gegenbauer polynomial of degree $m$ and parameter $\lambda = \frac{n-2}{2}$. Explicitly, the Gegenbauer polynomials are given by
$$
C_m^{\lambda}(t) = \sum_{k=0}^{\lfloor m/2 \rfloor} \frac{(-1)^k\, \Gamma(m - k + \lambda)}{k!\, (m-2k)!\, \Gamma(\lambda)}\, (2t)^{m-2k},
$$
see~\cite[Chapter 4]{SteinWeiss1971}.
\section{Series expansion of the Szeg\"o kernel}
\begin{theorem} \label{Pr1}
For each $\alpha > 1$, $\beta > 1$  and $n \geq 2$ we have
\begin{equation}\label{prek}
    \int_{\partial B^n}\frac{1}{|x-\eta|^{2(\alpha-1)}}\frac{1}{|y-\eta|^{2(\beta-1)}}\,d\sigma(\eta) = \frac{1}{(1+|x|^2)^{\alpha-1}(1+|y|^2)^{\beta-1}}\,X_9(\alpha-1,\beta-1;\frac{n}{2};X,Y,Z),
\end{equation}
where $X = \left(\frac{|x|}{1+|x|^2}\right)^2$, $Y =\frac{2\langle x,y\rangle}{(1+|y|^2)(1+|x|^2)}$ and $Z = \left(\frac{|y|}{1+|y|^2}\right)^2$.
\end{theorem}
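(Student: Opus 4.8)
The plan is to expand the two Poisson-type factors of the integrand into Gegenbauer polynomials of parameter $\lambda:=\tfrac{n-2}{2}$, to carry out the integration over $\partial B^{n}$ by orthogonality of spherical harmonics, and then to recognise the resulting triple series as $X_{9}$ by means of a classical summation of a very well-poised ${}_{4}F_{3}$ at $-1$.

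The first step is elementary. For $\eta\in\partial B^{n}$ one has $|x-\eta|^{2}=1+|x|^{2}-2\langle x,\eta\rangle=(1+|x|^{2})\bigl(1-2r_{x}\langle\hat x,\eta\rangle\bigr)$, where $r_{x}:=\tfrac{|x|}{1+|x|^{2}}$ and $\hat x:=x/|x|$ (when $x=0$ or $y=0$ one checks the formula directly), and similarly for $y$. Since $|x|<1$ forces $2r_{x}<1$, the factor $\bigl(1-2r_{x}\langle\hat x,\eta\rangle\bigr)^{-(\alpha-1)}$ is an absolutely and uniformly (in $\eta$) convergent binomial series; substituting into it the standard expansion $s^{j}=\tfrac{j!}{2^{j}}\sum_{k\ge0}\tfrac{(j-2k+\lambda)\,\Gamma(\lambda)}{k!\,\Gamma(j-k+\lambda+1)}\,C_{j-2k}^{\lambda}(s)$ of a power into Gegenbauer polynomials (see \cite{BatemanHTF}) yields
\[
(1-2rs)^{-a}=\sum_{m\ge0}a_{m}(r)\,C_{m}^{\lambda}(s),\qquad a_{m}(r):=(m+\lambda)\,\Gamma(\lambda)\sum_{k\ge0}\frac{(a)_{m+2k}}{k!\,\Gamma(m+k+\lambda+1)}\,r^{m+2k},
\]
a series with nonnegative terms that converges absolutely and uniformly on $[-1,1]$ (it is dominated by its value $(1-2r)^{-a}$ at $s=1$). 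When $n=2$, so $\lambda=0$, one argues in the same way with Chebyshev polynomials in place of Gegenbauer polynomials.

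Next, by \eqref{zonal} the function $C_{m}^{\lambda}(\langle\hat x,\cdot\rangle)=\tfrac{\lambda}{m+\lambda}\,Z_{m}(\hat x,\cdot)$ is a spherical harmonic of degree $m$ on $\partial B^{n}$, so orthogonality of spherical harmonics of distinct degrees together with the reproducing property of $Z_{m}$ gives
\[
\int_{\partial B^{n}}C_{m}^{\lambda}(\langle\hat x,\eta\rangle)\,C_{m'}^{\lambda}(\langle\hat y,\eta\rangle)\,d\sigma(\eta)=\delta_{mm'}\,\frac{\lambda}{m+\lambda}\,C_{m}^{\lambda}(t),\qquad t:=\langle\hat x,\hat y\rangle .
\]
Integrating the product of the two expansions term by term (legitimate by the uniform convergence above), the left-hand side of \eqref{prek} becomes $\bigl[(1+|x|^{2})^{\alpha-1}(1+|y|^{2})^{\beta-1}\bigr]^{-1}\sum_{m\ge0}a_{m}(r_{x})\,a_{m}(r_{y})\,\tfrac{\lambda}{m+\lambda}\,C_{m}^{\lambda}(t)$. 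I would then insert the defining series of $a_{m}(r_{x}),a_{m}(r_{y})$ (with summation indices $k,k'$) and the explicit polynomial expression for $C_{m}^{\lambda}(t)$ recorded in the preliminaries (index $i$), and re-index the four-fold sum via $p:=k+i$, $q:=m-2i$, $l:=k'+i$. This turns the monomial $r_{x}^{m+2k}r_{y}^{m+2k'}t^{\,m-2i}$ into $X^{p}Y^{q}Z^{l}$ for the $X,Y,Z$ of the statement — the factor $2^{\,m-2i}=2^{q}$ produced by $C_{m}^{\lambda}$ being absorbed into $Y=2r_{x}r_{y}t$ — and, after cancelling the common factor $(\alpha-1)_{2p+q}(\beta-1)_{2l+q}$ and simplifying the Gamma quotients, the theorem is reduced to the summation identity
\[
\sum_{i=0}^{\min(p,l)}\frac{(-1)^{i}\,(q+2i+\lambda)\,\Gamma(q+i+\lambda)}{i!\,(p-i)!\,(l-i)!\,\Gamma(p+q+i+\lambda+1)\,\Gamma(l+q+i+\lambda+1)}=\frac{1}{p!\,l!\,\Gamma(p+q+l+\lambda+1)}.
\]

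To finish, multiply this identity by $p!\,l!$ and write its left-hand side in hypergeometric notation with $a:=q+\lambda$: separating off the $i=0$ term, it equals
\[
\frac{\Gamma(q+\lambda+1)}{\Gamma(p+q+\lambda+1)\,\Gamma(l+q+\lambda+1)}\;{}_{4}F_{3}\!\left(\begin{matrix}a,\ 1+\tfrac a2,\ -p,\ -l\\[3pt]\tfrac a2,\ 1+a+p,\ 1+a+l\end{matrix};-1\right),
\]
a terminating very well-poised series whose value is supplied by the classical summation
\[
{}_{4}F_{3}\!\left(\begin{matrix}a,\ 1+\tfrac a2,\ b,\ c\\[3pt]\tfrac a2,\ 1+a-b,\ 1+a-c\end{matrix};-1\right)=\frac{\Gamma(1+a-b)\,\Gamma(1+a-c)}{\Gamma(1+a)\,\Gamma(1+a-b-c)}
\]
(a $z=-1$ analogue of Dixon's theorem; see \cite{BatemanHTF}), taken with $b=-p$ and $c=-l$; the displayed product then collapses to $\Gamma(p+q+l+\lambda+1)^{-1}$, which is $p!\,l!$ times the right-hand side of the identity, and \eqref{prek} follows. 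I expect this last step to be the main obstacle — spotting that the coefficient sum is precisely a very well-poised ${}_{4}F_{3}(-1)$ and invoking the correct summation theorem — together with the bookkeeping of the three-fold re-indexing and the routine but slightly delicate justification of the interchanges of summation and integration, for which one uses that $|x|,|y|<1$ places $X,Y,Z$ inside the domain of convergence of $X_{9}$.
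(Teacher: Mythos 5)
Your argument is correct, but it follows a genuinely different route from the paper. The paper proves Theorem~\ref{Pr1} by writing each factor $|x-\eta|^{-2(\alpha-1)}$ as a Laplace-transform integral, reducing the spherical integral of $e^{\langle v,\eta\rangle}$ to a modified Bessel function (a ${}_0F_1$), and then expanding and integrating term by term in the two Laplace variables, so that the $X_9$ series assembles directly from Gamma integrals with no hypergeometric summation theorem needed. You instead factor $|x-\eta|^{2}=(1+|x|^2)(1-2r_x\langle\hat x,\eta\rangle)$, expand each Gegenbauer-type factor in zonal harmonics, integrate by orthogonality to get a single sum $\sum_m a_m(r_x)a_m(r_y)\tfrac{\lambda}{m+\lambda}C_m^\lambda(t)$, and after re-indexing reduce the theorem to a terminating very well-poised ${}_4F_3(-1)$ summation --- I checked the reduction and the application of the summation theorem (with $a=q+\lambda$, $b=-p$, $c=-l$), and they are exact; amusingly, this is the same Bateman identity (p.~190, Eq.~(4)) that the paper itself invokes later, in Remark~5.2, to collapse the Bergman-kernel expansion back to the Szeg\H{o} kernel, so your proof is in spirit closer to Section~5 than to the paper's Section~3. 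What each approach buys: the paper's Laplace/Bessel route needs no summation theorem and treats all $n\ge2$ uniformly, while yours stays entirely within spherical-harmonic orthogonality and makes the zonal-harmonic structure of the kernel explicit, at the price of the ${}_4F_3$ input. Two points you flagged should indeed be handled if this were written out: the $n=2$ ($\lambda=0$) case needs the Chebyshev degeneration (or a continuity-in-$\lambda$ argument for the final identity), and since the inner $i$-sum is alternating, the re-indexing of the quadruple series requires absolute convergence --- either verified directly via $|C_m^\lambda|$ bounds, or obtained for small $|x|,|y|$ and extended by real-analyticity of both sides, exactly as the paper does for its Theorem~\ref{theorem necooo}. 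Neither issue is a gap in the idea; both are routine.
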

\begin{proof}
   Since the unit sphere \( \partial B^n \) is invariant under the action of the orthogonal group \( O(n) \), we may choose an orthogonal transformation \( R \in O(n) \) such that
$$
Rx = (a, 0, \dots, 0), \qquad Ry = (b, c, 0, \dots, 0)
$$
where $b = \frac{\langle x,y\rangle}{|x|}$, $c = \sqrt{|y|^2 - |b|^2}$ and $a = |x|$. For $x=0$, we set $b=0$. The integral over the sphere is invariant under such a change of variables, and hence we may assume without loss of generality that \( x \) and \( y \) are of this simplified form. Our integral therefore becomes:
\begin{equation} \label{eq7}
    \int_{\partial B^n}\frac{1}{(1-2a\eta_{1} + |x|^2)^{\alpha-1}}\frac{1}{(1-2b\eta_{1} - 2c\eta_{2}  + |y|^2)^{\beta-1}}d\sigma(\eta).
\end{equation}
Using the fact, that $\frac{1}{(1+x)^\alpha} = \frac{1}{\Gamma(\alpha)}\int_{0}^{\infty} e^{-t(1+x)}t^{\alpha -1}dt$, valid for $\alpha > 0$, we can rewrite each fraction as its Laplace transform, thus \eqref{eq7} becomes:
\begin{equation*}
    \frac{1}{\Gamma(\alpha-1)\Gamma(\beta-1)}\int_{\partial B^n}
\int_0^\infty \int_0^\infty 
t^{\alpha - 2} s^{\beta - 2} 
e^{- t(1 - 2a \eta_1 + |x|^2) 
- s(1 - 2b \eta_1 - 2c \eta_2 + |y|^2)} 
\, dt \, ds \, d\sigma(\eta).
\end{equation*}
Changing the order of integration (justified by absolute convergence of whole integral) and regrouping terms we arrive at:
\begin{equation} \label{jadro0}
    \frac{1}{\Gamma(\alpha - 1)\Gamma(\beta - 1)} 
\int_0^\infty \int_0^\infty 
t^{\alpha - 2} s^{\beta - 2} 
e^{- t(1 + |x|^2) - s(1 + |y|^2)} 
\left( 
\int_{\partial B^n} 
e^{2(a t + b s)\eta_1 + 2c s \eta_2} 
\, d\sigma(\eta) 
\right)
\, dt \, ds.
\end{equation}
Once more we use use an orthogonal transformation to align the vector $$
v := (2a t + 2b s, 2c s, 0, \dots, 0) \in \mathbb{R}^n
$$
with the first coordinate axis. More precisely, there exists a rotation \( S \in O(n)\) such that
$$
S v = (2\sqrt{(a t + b s)^2 + (c s)^2}, 0, \dots, 0).
$$
Again the spherical measure \( d\sigma \) is invariant under rotations, thus we have
\begin{equation} \label{baseel integral}
     \int_{\partial B^n} e^{\langle v, \eta \rangle} \, d\sigma(\eta)= \int_{\partial B^n} e^{\sqrt{(2a t + 2b s)^2 + (2c s)^2} \cdot \eta_1} \, d\sigma(\eta).
\end{equation}
Making a change to spherical coordinates, we have $\eta_1 = \cos(\theta_1)$ and the surface element is
$$
d\sigma(\eta) = (\omega_{n-1})^{-1}
\sin^{n-2}\theta_1 \sin^{n-3}\theta_2 \cdots \sin\theta_{n-2}\,
d\theta_1 \cdots d\theta_{n-1},
$$
where $\omega_{n-1}$ is surface area of the $n$ dimensional sphere.
Thus,  \eqref{baseel integral} can be written as
$$
\int_{\partial B^n} e^{\langle v, \eta \rangle} d\sigma(\eta)
= \int_{0}^{2\pi} \int_{0}^{\pi} \cdots \int_{0}^{\pi}
e^{|v| \cos\theta_1}
\prod_{j=1}^{n-2} \sin^{n-1-j}\theta_j \,
d\theta_1 \cdots d\theta_{n-1} \\
= \frac{\omega_{n-2}}{\omega_{n-1}} \int_0^\pi e^{|v| \cos\theta_1} \sin^{n-2}\theta_1\, d\theta_1.
$$
It follows from \cite[Eq.~3.915.4]{GradshteynRyzhik} that 
$$
\int_0^\pi e^{|v| \cos\theta_1} \sin^{n-2}\theta_1\, d\theta_1
= \sqrt{\pi} \, \Gamma\left( \frac{n-1}{2} \right)
  \left( \frac{2}{|v|} \right)^{(n-2)/2}
  I_{(n-2)/2}(|v|),
$$
where \( I_\nu(z) \) is the modified Bessel function of the first kind. From \cite[Eq. 8.445]{{GradshteynRyzhik}} we know that $I_\nu(z)$ admits series expansion:
$$
I_{(n-2)/2}(z) = \frac{ \left( \frac{z}{2} \right)^\frac{(n-2)}{2} }{ \Gamma(\frac{n}{2}) } \;
{}_0F_1\left( \frac{n}{2};\; \frac{z^2}{4} \right).
$$
Now, as $\frac{\omega_{n-2}}{\omega_{n-1}}
= \frac{ \Gamma\left( \frac{n}{2} \right) }
       { \sqrt{\pi}\; \Gamma\left( \frac{n-1}{2} \right) }$ we get, that \eqref{baseel integral} is equal to 
       
\begin{equation} \label{jadro1}
\int_{\partial B^n} e^{\sqrt{(2a t + 2b s)^2 + (2c s)^2} \cdot \eta_1} \, d\sigma(\eta) ={}_0F_1\left( \frac{n}{2}; (a t + b s)^2 + (c s)^2 \right).
\end{equation}
Remembering that $a = |x|$, $b = \frac{\langle x,y\rangle}{|x|}$ and $c = \sqrt{|y|^2 - |b|^2}$ we have that $((a t + b s)^2 + (c s)^2)$ is equal to $(t|x|)^2 + 2\langle x,y\rangle t s + (s|y|)^2$. Putting this back into \eqref{jadro0} and expanding ${}_0F_1$ we see that \eqref{jadro0} is equal to
\begin{align*}
& \int_0^\infty \int_0^\infty 
t^{\alpha - 2} s^{\beta - 2} 
e^{- t(1 + |x|^2) - s(1 + |y|^2)} 
\sum_{k=0}^\infty \frac{1}{\left(\tfrac{n}{2}\right)_k} \cdot \frac{((t|x|)^2 + 2\langle x,y\rangle t s + (s|y|)^2)^k}{k!} \, dt \, ds \\
&= \int_0^\infty \int_0^\infty 
t^{\alpha - 2} s^{\beta - 2} 
e^{- t(1 + |x|^2) - s(1 + |y|^2)} 
\sum_{k=0}^\infty \frac{1}{\left(\tfrac{n}{2}\right)_k k!} 
\sum_{\substack{p, q, l \geq 0 \\ p + q + l = k}} 
\binom{k}{p, q, l} 
(t|x|)^{2p} \cdot (2 \langle x, y \rangle t s)^q \cdot (s|y|)^{2l} 
\, dt \, ds,
\end{align*}
where \( \binom{k}{p,q,l} = \frac{k!}{p! q! l!} \) is the multinomial coefficient.

We now interchange the order of summation and integration, which is again justified by absolute convergence of the integrals for \( \alpha, \beta > 1 \). This yields
$$
\sum_{k=0}^\infty \frac{1}{\left(\tfrac{n}{2}\right)_k k!} 
\sum_{\substack{p, q, l \geq 0 \\ p + q + l = k}} 
\binom{k}{p,q,l} \cdot 
(2 \langle x, y \rangle)^q \cdot |x|^{2p} \cdot |y|^{2l} 
\left( 
\int_0^\infty e^{-t(1 + |x|^2)} t^{2p + q + \alpha - 2} \, dt
\right)
\left(
\int_0^\infty e^{-s(1 + |y|^2)} s^{2l + q + \beta - 2} \, ds
\right).
$$
Using the identity
$\int_0^\infty e^{-\lambda t} t^{\mu - 1} \, dt = \frac{\Gamma(\mu)}{\lambda^\mu}$ we obtain
$$
\int_0^\infty e^{-t(1 + |x|^2)} t^{2p + q + \alpha - 2} \, dt = \frac{\Gamma(2p + q + \alpha - 1)}{(1 + |x|^2)^{2p + q + \alpha - 1}},
$$
and similarly for the \(s\)-integral. Inserting these expressions back into the sum, using the definition of multinational coefficients, grouping terms by their powers, and using the identities $\Gamma(2p + q + \alpha - 1) = (\alpha -1)_{2p+q}\Gamma(\alpha-1)$, $\Gamma(2p + q + \beta - 1) = (\beta -1)_{2p+q}\Gamma(\beta-1)$, we obtain 
\begin{multline*}
\frac{\Gamma(\alpha-1)\Gamma(\beta-1)}{(1+|x|^{2})^{\alpha-1}(1+|y|^{2})^{\alpha-1}} 
\sum_{k=0}^\infty \frac{1}{\left(\tfrac{n}{2}\right)_k k!} 
\sum_{\substack{p, q, l \geq 0 \\ p + q + l = k}} 
\frac{k!(\alpha-1)_{2p+q}(\beta-1)_{2l+q}}{p!q!l!} \\ \times
\left( \frac{|x|}{1+|x|^2} \right)^{2p} 
\left( \frac{2 \langle x, y \rangle}{(1+|x|^{2})(1+|y|^{2})} \right)^q 
\left( \frac{|y|}{1+|y|^2} \right)^{2l}.
\end{multline*}
After cancelling $k!$ and the omitted gamma factors, followed by a change of the summation indices, we finally arrive at:
\begin{equation*}
    \frac{1}{(1+|x|^{2})^{\alpha-1}(1+|y|^{2})^{\beta-1}} 
\sum_{p,q,l}^\infty \frac{(\alpha-1)_{2p+q}(\beta-1)_{2l+q}}{\left(\tfrac{n}{2}\right)_{p+q+l}\:p!\,q!\,l! } 
\frac{}{} 
\left( \frac{|x|}{1+|x|^2} \right)^{2p} 
\left( \frac{2 \langle x, y \rangle}{(1+|x|^{2})(1+|y|^{2})} \right)^q 
\left( \frac{|y|}{1+|y|^2} \right)^{2l},
\end{equation*}
which is exactly what we wanted to show.
\end{proof}

\begin{corollary}\label{cor:szego_kernel}
  Let $x,y\in B^n$.  The Szegö kernel $K_h(x,y)$ can be written as
  \begin{equation*}\label{eq:szego_kernel_explicit}
    K_h(x,y)
    = \frac{(1-\lvert x\rvert^2)^{n-1}(1-\lvert y\rvert^2)^{n-1}}
           {(1+\lvert x\rvert^2)^{n-1}(1+\lvert y\rvert^2)^{n-1}}
      X_9\!\bigl(n-1,n-1;\tfrac n2;X,Y,Z\bigr),
  \end{equation*}
  where
$$
    X = \left(\frac{\lvert x\rvert}{1+\lvert x\rvert^2}\right)^2,\quad
    Y = \frac{2\langle x,y\rangle}{(1+\lvert x\rvert^2)\,(1+\lvert y\rvert^2)},\quad
    Z = \left(\frac{\lvert y\rvert}{1+\lvert y\rvert^2}\right)^2.
$$
\end{corollary}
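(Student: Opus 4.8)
The plan is to apply Theorem~\ref{Pr1} with the specific choice of parameters $\alpha = \beta = n$, which makes $\alpha - 1 = \beta - 1 = n-1$, and to recognize the resulting integral as the Szegő kernel via the Poisson-kernel representation \eqref{poisson}. First I would recall that the hyperbolic Poisson kernel has the explicit form $P_h(\eta,x) = (1-|x|^2)^{n-1}/|x-\eta|^{2(n-1)}$ for $\eta \in \partial B^n$ and $x \in B^n$, as quoted earlier from \cite[Chapter 5]{Stoll2016}. Substituting this into \eqref{poisson} gives
\[
K_h(x,y) = (1-|x|^2)^{n-1}(1-|y|^2)^{n-1}\int_{\partial B^n}\frac{1}{|x-\eta|^{2(n-1)}}\frac{1}{|y-\eta|^{2(n-1)}}\,d\sigma(\eta).
\]

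Next I would invoke Theorem~\ref{Pr1} with $\alpha = n$ and $\beta = n$. Since $n \geq 2$ we have $\alpha, \beta \geq 2 > 1$, so the hypotheses are met, and the theorem evaluates the remaining integral as
\[
\int_{\partial B^n}\frac{1}{|x-\eta|^{2(n-1)}}\frac{1}{|y-\eta|^{2(n-1)}}\,d\sigma(\eta) = \frac{1}{(1+|x|^2)^{n-1}(1+|y|^2)^{n-1}}\,X_9\bigl(n-1,n-1;\tfrac n2;X,Y,Z\bigr),
\]
with $X$, $Y$, $Z$ exactly as in the statement of the corollary (these are the same $X$, $Y$, $Z$ appearing in Theorem~\ref{Pr1} with $\alpha-1 = \beta-1 = n-1$). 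Combining the two displays yields the claimed formula immediately.

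The only genuine subtlety — and the one point I would make explicit rather than gloss over — is the convergence of the $X_9$ series at the arguments $X$, $Y$, $Z$ arising here. Exton's series converges absolutely for $|X| < \tfrac14$, $|Z| < \tfrac14$, and $|Y| < \tfrac12 + \sqrt{(1-4|X|)(1-4|Z|)}$. For $x \in B^n$ we have $|x| < 1$, so $X = \bigl(|x|/(1+|x|^2)\bigr)^2$; since the function $r \mapsto r/(1+r^2)$ is bounded by $\tfrac12$ on $[0,1]$, we get $X < \tfrac14$, and likewise $Z < \tfrac14$. For the $Y$ condition, Cauchy–Schwarz gives $|\langle x,y\rangle| \le |x|\,|y|$, hence $|Y| \le 2|x|\,|y|/\bigl((1+|x|^2)(1+|y|^2)\bigr) \le 2\sqrt{X}\sqrt{Z} < \tfrac12$, which is within the admissible region. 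So the right-hand side is a well-defined convergent series for all $x, y \in B^n$, and the identity from Theorem~\ref{Pr1} applies verbatim. I do not anticipate any real obstacle here; the corollary is essentially a direct specialization, and the proof is short.
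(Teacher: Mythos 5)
Your proof is correct and follows essentially the same route as the paper: the paper's own proof is simply the observation that the corollary follows from Theorem~\ref{Pr1} (with $\alpha=\beta=n$) combined with the representation \eqref{poisson}, exactly as you do. Your additional verification that $X<\tfrac14$, $Z<\tfrac14$, and $|Y|<\tfrac12+\sqrt{(1-4X)(1-4Z)}$ for all $x,y\in B^n$ is a sound and worthwhile extra detail, but it does not change the approach.
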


\begin{proof}
  The result follows directly from
  Proposition~\ref{Pr1} and the equation~\eqref{poisson}.
\end{proof}
\begin{theorem} \label{theorem necooo}
    Let $x,y\in B^n$, then the Szegö kernel $K_h(x,y)$ can be written as
\begin{align}\notag
K_h(x,y) 
&= 
\left(
    1 +
    \frac{
        |x-y|^2
    }{
        (1 - |x|^{2})(1 - |y|^{2})
    }
\right)^{\!2 - \tfrac{3n}{2}}
\\[4pt]\notag
&\quad \times
\sum_{p,l=0}^{\,n-1}
\frac{(1-n)_{p+l}}{p!\,l!}\,
\frac{(n-1)_p\,(n-1)_l}{\left(\tfrac{n}{2}\right)_{p+l}}
\left( \frac{|x|^{2}}{|x|^{2}-1} \right)^{\!p}
\left( \frac{|y|^{2}}{|y|^{2}-1} \right)^{\!l}
\\[4pt]\label{szego_roz}
&\quad \times
{}_2F_1\!\left(
\begin{matrix}
1 + p - \tfrac{n}{2},\; 1 + l - \tfrac{n}{2} \\[4pt]
\tfrac{n}{2} + p + l
\end{matrix}
;\;
\frac{
    -|x-y|^2
}{
    (1 - |x|^{2})(1 - |y|^{2})
}
\right)
\end{align}

\end{theorem}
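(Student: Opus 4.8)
The plan is to return to the integral representation
$K_h(x,y)=(1-|x|^2)^{n-1}(1-|y|^2)^{n-1}\int_{\partial B^n}|x-\eta|^{-2(n-1)}\,|y-\eta|^{-2(n-1)}\,d\sigma(\eta)$
supplied by \eqref{poisson}, but to evaluate the spherical integral along a route that produces $|x-y|^2$ rather than $\langle x,y\rangle$. I would first merge the two factors by the Feynman (Beta‑function) parametrization
$$\frac{1}{|x-\eta|^{2(n-1)}|y-\eta|^{2(n-1)}}=\frac{\Gamma(2n-2)}{\Gamma(n-1)^2}\int_0^1\frac{(u(1-u))^{n-2}}{\bigl(u|x-\eta|^2+(1-u)|y-\eta|^2\bigr)^{2n-2}}\,du,$$
and then exploit, for $\eta\in\partial B^n$ and $z_u:=ux+(1-u)y$, the two elementary identities
$$u|x-\eta|^2+(1-u)|y-\eta|^2=u(1-u)|x-y|^2+|z_u-\eta|^2,\qquad u(1-u)|x-y|^2+|z_u|^2=u|x|^2+(1-u)|y|^2,$$
which turn the inner integrand into $\bigl(\mu_u-2\langle z_u,\eta\rangle\bigr)^{-(2n-2)}$ with $\mu_u:=1+u|x|^2+(1-u)|y|^2$. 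This is exactly where $|x-y|^2$ enters.

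The inner spherical integral is now of the same shape handled in Theorem~\ref{Pr1}; computing it there (the one–variable reduction to a Gegenbauer/Bessel integral, i.e.\ the $\beta\to1$ case applied to the rescaled point $z_u'=z_u/c_u$ with $c_u^2-\mu_uc_u+|z_u|^2=0$) yields $\int_{\partial B^n}\bigl(\mu_u-2\langle z_u,\eta\rangle\bigr)^{-(2n-2)}\,d\sigma(\eta)=\mu_u^{-(2n-2)}\,{}_2F_1\!\bigl(n-1,\,n-\tfrac12;\,\tfrac n2;\,\xi_u\bigr)$, where $\xi_u:=4|z_u|^2/\mu_u^2$. The pivotal step is then the quadratic transformation
$${}_2F_1\!\bigl(a,\,a+\tfrac12;\,c;\,\xi\bigr)=(1-\xi)^{-a}\,{}_2F_1\!\bigl(2a,\,2c-2a-1;\,c;\,\tfrac12-\tfrac1{2\sqrt{1-\xi}}\bigr)$$
applied with $a=n-1$, $c=\tfrac n2$: since $2c-2a-1=1-n$ is a non‑positive integer, the Gauss function collapses to a polynomial of degree $\le n-1$ in $\tfrac12-\tfrac1{2\sqrt{1-\xi_u}}$, and this is precisely the mechanism responsible for the finite double sum in \eqref{szego_roz}. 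Moreover $(1-\xi_u)^{-(n-1)}\mu_u^{-(2n-2)}=(\mu_u^2-4|z_u|^2)^{-(n-1)}=\bigl((uA+(1-u)B)^2+4u(1-u)|x-y|^2\bigr)^{-(n-1)}$, with $A:=1-|x|^2$, $B:=1-|y|^2$, an honest rational function of $u$. Expanding the polynomial and integrating in $u$ introduces a third index $q$, which by $(\tfrac n2)_{p+l}(\tfrac n2+p+l)_q=(\tfrac n2)_{p+q+l}$ reassembles the summations into the double sum over $p,l$ of Gauss functions in $-|x-y|^2/(AB)$ with prefactor $\bigl(1+|x-y|^2/(AB)\bigr)^{2-3n/2}$, as claimed; here one uses the identity $\sum_{p+l=j}\tfrac{(n-1)_p(n-1)_l}{p!\,l!}=\tfrac{(2n-2)_j}{j!}$ to see that the new $j$‑index matches $p+l$ through the common factor $(1-n)_{\,\cdot}$.

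The main obstacle is the $u$‑integration. After the binomial expansion of $\bigl(\tfrac12-\tfrac1{2\sqrt{1-\xi_u}}\bigr)^{j}$ one meets integrals of rational functions of $u$ against odd powers of $\bigl((uA+(1-u)B)^2+4u(1-u)|x-y|^2\bigr)^{1/2}$, and one must verify that the a priori transcendental contributions of these odd radicals cancel or telescope, leaving for each $j$ precisely a terminating/hypergeometric expression in $-|x-y|^2/(AB)$; carrying this bookkeeping through and matching the weights $(1-n)_{p+l}$, $(n-1)_p(n-1)_l/(\tfrac n2)_{p+l}$ and the monomials $\bigl(|x|^2/(|x|^2-1)\bigr)^p\bigl(|y|^2/(|y|^2-1)\bigr)^l$ is the bulk of the work. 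A secondary point: for odd $n$ the Gauss functions in \eqref{szego_roz} are genuine power series (radius of convergence $1$ in $-|x-y|^2/(AB)$), so the identity should first be proved on $\{|x-y|^2<(1-|x|^2)(1-|y|^2)\}$ and then extended to all of $B^n\times B^n$ by real‑analyticity of $K_h$. Finally, as a consistency check, $1+\tfrac{|x-y|^2}{(1-|x|^2)(1-|y|^2)}=\tfrac{1-2\langle x,y\rangle+|x|^2|y|^2}{(1-|x|^2)(1-|y|^2)}$, so the prefactor supplies exactly the denominator $(1-2\langle x,y\rangle+|x|^2|y|^2)^{3n/2-2}$ of the even–dimensional closed form of \cite{Stoll2019}, the bracketed double sum then being $Q_n(x,y)/\bigl((1-|x|^2)(1-|y|^2)\bigr)^{3n/2-2}$.
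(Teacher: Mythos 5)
Your route is genuinely different from the paper's, and its preparatory steps are correct: the Feynman parametrization, the identities $u|x-\eta|^2+(1-u)|y-\eta|^2=\mu_u-2\langle z_u,\eta\rangle$ and $\mu_u^2-4|z_u|^2=(uA+(1-u)B)^2+4u(1-u)|x-y|^2$, the evaluation $\int_{\partial B^n}\bigl(\mu_u-2\langle z_u,\eta\rangle\bigr)^{-(2n-2)}d\sigma(\eta)=\mu_u^{-(2n-2)}\,{}_2F_1\bigl(n-1,n-\tfrac12;\tfrac n2;4|z_u|^2/\mu_u^2\bigr)$, and the quadratic transformation whose parameter $2c-2a-1=1-n$ forces termination all check out; it is also a genuinely attractive feature that after these steps the integrand depends on $(x,y)$ only through $A=1-|x|^2$, $B=1-|y|^2$ and $|x-y|^2$, exactly as the target formula does. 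The problem is that the decisive computation is not carried out. After expanding the terminating polynomial in $w_u=\tfrac12-\tfrac1{2\sqrt{1-\xi_u}}$ you are left, for each odd power of the radical, with integrals of the form $\int_0^1(u(1-u))^{n-2}\,\mu_u^{\,k}\bigl((uA+(1-u)B)^2+4u(1-u)|x-y|^2\bigr)^{-(n-1)-j/2}du$, i.e.\ half-integer powers of a quadratic in $u$; these are a priori Appell/elliptic-type objects, and your argument offers no mechanism, beyond the hope that they "cancel or telescope," for why their aggregate equals the very specific finite double sum in \eqref{szego_roz}, with the separated monomials $\bigl(|x|^2/(|x|^2-1)\bigr)^p\bigl(|y|^2/(|y|^2-1)\bigr)^l$, the weights $(1-n)_{p+l}(n-1)_p(n-1)_l/\bigl(\tfrac n2\bigr)_{p+l}$, and the ${}_2F_1$ in $-|x-y|^2/(AB)$. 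The Pochhammer identity $(\tfrac n2)_{p+l}(\tfrac n2+p+l)_q=(\tfrac n2)_{p+q+l}$ and the Vandermonde sum you invoke are consistency checks, not a derivation, so by your own description the bulk of the proof is missing. (Your final remarks — prove the identity on $\{|x-y|^2<AB\}$ and extend by real-analyticity, and the observation that the prefactor reproduces Stoll's denominator $1-2\langle x,y\rangle+|x|^2|y|^2$ — are fine.)

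For comparison, the paper never returns to the integral: it starts from the $X_9$ series of Corollary~\ref{cor:szego_kernel} and reaches \eqref{szego_roz} by pure series manipulation — the Gauss quadratic transformation \eqref{Gauss} applied in $x$ and then in $y$, the Euler-type Appell transformation \eqref{eq:F1-transform} applied twice, and a final Euler transformation of ${}_2F_1$ — with convergence secured for $|x|,|y|<\tfrac14$ and the identity principle giving the general case. If you want to complete your route, the realistic option is not termwise expansion of the radical but recognizing the $u$-integral as a known Euler-type integral representation of an Appell function (so that the transcendental pieces never appear separately); as written, the cancellation claim is an unproven gap.
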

\begin{proof}
From Corollary~\ref{cor:szego_kernel} it follows that
\begin{equation*}
    K_h(x,y)
    = 
    \frac{(1 - \lvert x \rvert^2)^{\,n - 1}\,(1 - \lvert y \rvert^2)^{\,n - 1}}
         {(1 + \lvert x \rvert^2)^{\,n - 1}\,(1 + \lvert y \rvert^2)^{\,n - 1}}
    \;
    X_9\!\left( n - 1,\, n - 1;\, \tfrac{n}{2};\, X,\, Y,\, Z \right),
\end{equation*} 
with
\begin{equation*}
    X = \left( \frac{\lvert x \rvert}{\,1 + \lvert x \rvert^2\,} \right)^{\!2},
    \qquad
    Y = \frac{2 \langle x, y \rangle}
            {(1 + \lvert x \rvert^2)\,(1 + \lvert y \rvert^2)},
    \qquad
    Z = \left( \frac{\lvert y \rvert}{\,1 + \lvert y \rvert^2\,} \right)^{\!2},
\end{equation*}
where
\begin{equation*}
    X_9\!\left( n - 1,\, n - 1;\, \tfrac{n}{2};\, X,\, Y,\, Z \right) 
    = 
    \sum_{p, q, l = 0}^{\infty} 
    \frac{
        (n - 1)_{2l + q}\,(n - 1)_{2p + q}
    }{
        \left( \tfrac{n}{2} \right)_{p + q + l}\, p!\, q!\, l!
    } 
    X^{p}\, Y^{q}\, Z^{l}.
\end{equation*}
For brevity we will denote
\(
X_9\!\left(n - 1,\, n - 1;\, \tfrac{n}{2};\, X, Y, Z\right)
\)
simply by \(X_9\).\\
Observing that
$$
(n - 1)_{2p + q} 
= (n - 1)_q \left( \tfrac{n - 1 + q}{2} \right)_p 
                  \left( \tfrac{n + q}{2} \right)_p 4^p,
$$
the inner sum over \(p\) can be recognized as a Gauss hypergeometric series, in particular,
\begin{align}\notag
X_9 
&= \sum_{p, q, l = 0}^\infty 
      \frac{(n - 1)_{2l + q}\,(n - 1)_{2p + q}}
           {\left( \tfrac{n}{2} \right)_{p + q + l}\, p!\, q!\, l! } 
      X^{p}\, Y^{q}\, Z^{l} 
\\[8pt]\notag
&= \sum_{p, q, l = 0}^\infty  
    \frac{(n - 1)_{2l + q}\,(n - 1)_q
          \left( \tfrac{n - 1 + q}{2} \right)_p
          \left( \tfrac{n + q}{2} \right)_p}
         {\left( \tfrac{n}{2} \right)_{q + l}\,
          \left( \tfrac{n}{2} + q + l \right)_{p}\,
          p! \, l! \, q!}\,
    (4X)^{p}\, Y^{q}\, Z^{l} 
\\[8pt]\label{Stoya}
&= \sum_{q, l = 0}^\infty 
    \frac{(n - 1)_{2l + q}\,(n - 1)_{q}}
         {\left( \tfrac{n}{2} \right)_{q + l} \, l! \, q!}\,
    Y^{q}\, Z^{l}\;
{}_2F_1\!\left(
        \begin{matrix}
            \tfrac{n - 1 + q}{2},\; \tfrac{n + q}{2} \\[6pt]
            \tfrac{n}{2} + q + l
        \end{matrix}
        ;\; \dfrac{4|x|^2}{(1 + |x|^2)^{2}}
    \right).
\end{align}
Now, by applying Gauss’ quadratic transformation (see~\cite[p.~64]{BatemanHTF}),
\begin{equation}\label{Gauss}
    {}_2F_1\!\left(
    \begin{matrix}
        \tfrac{1}{2}a,\; \tfrac{1}{2}a + \tfrac{1}{2} \\[2pt]
        a - b + 1
    \end{matrix}
    ;\,
    \dfrac{4z}{(1 + z)^2}
    \right)
    = (1 + z)^{a}\,
    {}_2F_1\!\left(
    \begin{matrix}
        a,\; b \\[2pt]
        a - b + 1
    \end{matrix}
    ;\,
    z
    \right),
\end{equation}
we obtain
\begin{equation}\label{pomocc}
    {}_2F_1\!\left(
    \begin{matrix}
        \tfrac{n - 1 + q}{2},\; \tfrac{n + q}{2} \\[4pt]
        \tfrac{n}{2} + q + l
    \end{matrix}
    ;\;
    \dfrac{4|x|^{2}}{(1 + |x|^{2})^{2}}
    \right)
    = (1 + |x|^{2})^{\,n - 1 + q}\,
    {}_2F_1\!\left(
    \begin{matrix}
        n - 1 + q,\; \tfrac{n}{2} - l \\[4pt]
        \tfrac{n}{2} + q + l
    \end{matrix}
    ;\;
    |x|^{2}
    \right).
\end{equation}
Substituting \eqref{pomocc} into \eqref{Stoya} and simplifying the Pochhammer symbols, we have  
\begin{align}\notag
    X_9 &= (1 + |x|^2)^{\,n - 1} 
    \sum_{p, q, l = 0}^\infty 
    \frac{(n - 1)_{2l}\,(n - 1 + 2l)_q \left( \tfrac{n}{2} - l \right)_p (n - 1)_{q + p}}
         {\left( \tfrac{n}{2} \right)_{l}\, \left( \tfrac{n}{2} + l \right)_{q + p}\, p!\, q!\, l!}\,
    |x|^{2p}\!
    \left( \frac{2 \langle x, y \rangle}{1 + |y|^2} \right)^{q} 
    Y^{l} 
\\[6pt]\label{nevim}
    &= (1 + |x|^2)^{\,n - 1} 
    \sum_{l = 0}^\infty 
    \frac{(n - 1)_{2l}}{\left( \tfrac{n}{2} \right)_l\, l!}\,
    Y^{l}\,
    F_1\!\left(
        \begin{matrix}
            n - 1;\; \tfrac{n}{2} - l,\; n - 1 + 2l \\[4pt]
            \tfrac{n}{2} + l
        \end{matrix}
        ;\;
        |x|^2,\;
        \dfrac{2 \langle x, y \rangle}{\,1 + |y|^2\,}
    \right),
\end{align}
where \(F_1\) denotes the first Appell hypergeometric function.\\
Since \(F_1\) satisfies the following \emph{Euler-type transformation} (see~\cite[p.~239, Eq.~(2)]{BatemanHTF}):
\begin{equation}\label{eq:F1-transform}
F_{1}\!\left(
\begin{matrix}
\alpha;\; \beta,\, \beta' \\[4pt]
\gamma
\end{matrix}
;\; x,\, y
\right)
=
(1 - x)^{-\alpha}\,
F_{1}\!\left(
\begin{matrix}
\alpha;\; \gamma - \beta - \beta',\,\beta'  \\[4pt]
\gamma
\end{matrix}
;\;
\dfrac{x}{\,x - 1\,},\;
\dfrac{x - y}{\,x - 1\,}
\right),
\end{equation}
we obtain
\begin{align}\notag
&F_{1}\!\left(
\begin{matrix}
n - 1;\; \tfrac{n}{2} - l,\, n - 1 + 2l \\[2pt]
\tfrac{n}{2} + l
\end{matrix}
;\; |x|^{2},\, \dfrac{2\langle x, y\rangle}{1 + |y|^{2}}
\right)
\\[6pt]\label{eq:F1-transform-applied}
&\quad =
(1 - |x|^{2})^{-(n - 1)}\,
F_{1}\!\left(
\begin{matrix}
n - 1;\; 1 - n,\, n - 1 + 2l \\[2pt]
\tfrac{n}{2} + l
\end{matrix}
;\;
\dfrac{|x|^{2}}{|x|^{2} - 1},\;
\dfrac{|x|^{2}(1 + |y|^{2}) - 2\langle x, y\rangle}
      {(1 + |y|^{2})(|x|^{2} - 1)}
\right).
\end{align}
After substituting \eqref{eq:F1-transform-applied} into \eqref{nevim} and simplifying the Pochhammer symbols, we derive
\begin{align}\label{kon}
    X_9 &= \left( \frac{1 + |x|^{2}}{1 - |x|^{2}} \right)^{n - 1}
    \sum_{p, q, l = 0}^\infty
    \frac{(n - 1)_{2l + q}\,(n - 1)_{p + q}\,(1 - n)_p}
         {\left( \tfrac{n}{2} \right)_{p + q + l}\, p!\, q!\, l!}\,
    \left( \frac{|x|^{2}}{|x|^{2} - 1} \right)^{p}
    S^{q}\, Y^{l} 
\\[6pt]\notag
    &= \left( \frac{1 + |x|^{2}}{1 - |x|^{2}} \right)^{n - 1}
    \sum_{p, q = 0}^\infty
    \frac{(n - 1)_{q}\,(n - 1)_{p + q}\,(1 - n)_p}
         {\left( \tfrac{n}{2} \right)_{p + q}\, p!\, q!}\,
    \left( \frac{|x|^{2}}{|x|^{2} - 1} \right)^{p}
    S^{q}
\\[6pt]\notag
    &\quad \times
    {}_2F_1\!\left(
        \begin{matrix}
            \tfrac{n - 1 + q}{2},\; \tfrac{n + q}{2} \\[6pt]
            \tfrac{n}{2} + q + p
        \end{matrix}
        ;\;
        \dfrac{4 |y|^{2}}{(1 + |y|^{2})^{2}}
    \right),
\end{align}
where
$$
S := \dfrac{|x|^{2}(1 + |y|^{2}) - 2\langle x, y \rangle}
          {(1 + |y|^{2})(|x|^{2} - 1)}.
$$
Again, by applying \eqref{Gauss}, we may rewrite
\begin{equation*}\label{eq:Gauss-applied-y}
{}_2F_1\!\left(
\begin{matrix}
\tfrac{n - 1 + q}{2},\; \tfrac{n + q}{2} \\[4pt]
\tfrac{n}{2} + q + p
\end{matrix}
;\;
\dfrac{4 |y|^{2}}{(1 + |y|^{2})^{2}}
\right)
=
(1 + |y|^{2})^{\,n - 1 + q}\,
{}_2F_1\!\left(
\begin{matrix}
n - 1 + q,\; \tfrac{n}{2} - p \\[4pt]
\tfrac{n}{2} + q + p
\end{matrix}
;\;
|y|^{2}
\right).
\end{equation*}
Thus
\begin{align}\label{konec1}
    X_9 &= \left(\frac{(1+|x|^2)(1+|y|^2)}{1-|x|^2}\right)^{n-1}\sum_{p,q,l}^\infty\frac{(n-1)_{l+q}(\frac{n}{2}-p)_l\,(n-1)_{p+q}\,(1-n)_p}{(\frac{n}{2})_{p+q+l}\,p!\,q!\,l!}\left(\frac{|x|^2}{|x|^2-1}\right)^p \, \left((1+|y|^2)S\right)^q \, |y|^{2l}\\\notag
    &= \left(\frac{(1+|x|^2)(1+|y|^2)}{1-|x|^2}\right)^{n-1} \sum_{p=0}^\infty \frac{(1-n)_p(n-1)_p}{(\frac{n}{2})_p\,p!}\left(\frac{|x|^2}{|x|^2-1}\right)^p\\[4 pt]\notag
    &\qquad \times F_{1}\!\left(
\begin{matrix}
n-1;\; \tfrac{n}{2}-p,\, n-1+p \\[2pt]
\tfrac{n}{2}+p
\end{matrix}
;\; |y|^{2},\, \dfrac{|x|^{2}(1+|y|^{2})-2\langle x,y\rangle}{(|x|^{2}-1)}
\right)\label{konec}
    \end{align}
Applying \eqref{eq:F1-transform} once more, we arrive at
\begin{align*}
&F_{1}\!\left(
\begin{matrix}
n - 1;\; \tfrac{n}{2} - p,\, n - 1 + p \\[2pt]
\tfrac{n}{2} + p
\end{matrix}
;\; |y|^{2},\,
\dfrac{|x|^{2}(1 + |y|^{2}) - 2\langle x, y \rangle}{\,|x|^{2} - 1\,}
\right)
\\[6pt]
&\quad =
(1 - |y|^{2})^{-(n - 1)}\,
F_{1}\!\left(
\begin{matrix}
n - 1;\; 1 - n + p,\, n - 1 + p \\[2pt]
\tfrac{n}{2} + p
\end{matrix}
;\;
\dfrac{|y|^{2}}{|y|^{2} - 1},\;
\dfrac{\,2\langle x, y \rangle - |x|^{2} - |y|^{2}\,}
     {\,(|x|^{2} - 1)(|y|^{2} - 1)\,}
\right).
\end{align*}
Hence
\begin{align}\notag
X_9 
&= 
\left[
    \frac{(1 + |x|^{2})(1 + |y|^{2})}
         {(1 - |x|^{2})(1 - |y|^{2})}
\right]^{\!n - 1}
\sum_{p, q, l = 0}^\infty 
\frac{
    (n - 1)_{l + q}\,(n - 1)_{p + q}\,(1 - n)_{p + l}
}{
    \left( \tfrac{n}{2} \right)_{p + q + l}\, p!\, q!\, l!
}
\left( \frac{|x|^{2}}{|x|^{2}-1} \right)^{\!p}
\left( \frac{|y|^{2}}{|y|^{2}-1} \right)^{\!l}
\\[6pt]\notag
&\quad \times 
\left[
    \frac{
        2\langle x, y \rangle - |x|^{2} - |y|^{2}
    }{
        (|x|^{2} - 1)(|y|^{2} - 1)
    }
\right]^{\!q}
\\[10pt]\notag
&= 
\left[
    \frac{(1 + |x|^{2})(1 + |y|^{2})}
         {(1 - |x|^{2})(1 - |y|^{2})}
\right]^{\!n - 1}
\sum_{p, l = 0}^{n-1}
\frac{
    (n - 1)_{p}\,(n-1)_{l}\,(1 - n)_{p + l}
}{
    \left( \tfrac{n}{2} \right)_{p + l}\, p!\, l!
}
\left( \frac{|x|^{2}}{|x|^{2}-1} \right)^{\!p}
\left( \frac{|y|^{2}}{|y|^{2}-1} \right)^{\!l}
\\[8pt]\notag
&\quad \times 
{}_2F_1\!\left(
\begin{matrix}
n - 1 + p,\; n - 1 + l \\[4pt]
\dfrac{n}{2} + p + l
\end{matrix}
;\;
\frac{
    2\langle x, y \rangle - |x|^{2} - |y|^{2}
}{
    (|x|^{2} - 1)(|y|^{2} - 1)
}
\right).\\\notag
\end{align}
Multiplying by 
$$
\left(\frac{(1 - |x|^{2})(1 - |y|^{2})}{(1 + |x|^{2})(1 + |y|^{2})}\right)^{n-1}
$$
and applying Euler’s transformation (see~\cite[p.~64, Eq.~(23)]{BatemanHTF}):
$$
{}_2F_1\!\left(
\begin{matrix}
a,\; b \\
c
\end{matrix}
;\, z
\right)
=
(1 - z)^{c - a - b}
\cdot
{}_2F_1\!\left(
\begin{matrix}
c - a,\; c - b \\
c
\end{matrix}
;\, z
\right),
$$
we obtain
\begin{equation}\label{koneccc}
\begin{aligned}
K_h(x,y)
&=
\left(
1+\frac{|x-y|^{2}}{(1-|x|^{2})(1-|y|^{2})}
\right)^{\,2-\tfrac{3n}{2}}
\sum_{p,l=0}^{n-1}
\frac{(1-n)_{p+l}}{p!\,l!}\,
\frac{(n-1)_p\,(n-1)_l}{\big(\tfrac{n}{2}\big)_{p+l}}
\left(\frac{|x|^{2}}{|x|^{2}-1}\right)^{p}
\left(\frac{|y|^{2}}{|y|^{2}-1}\right)^{l}
\\
&\quad\times
{}_2F_1\!\left(
\begin{matrix}
1+p-\tfrac{n}{2},\; 1+l-\tfrac{n}{2}
\\[2pt]
\tfrac{n}{2}+p+l
\end{matrix}
;\;
-\dfrac{|x-y|^{2}}{(1-|x|^{2})(1-|y|^{2})}
\right).
\end{aligned}
\end{equation}
Throughout the proof we may assume that $|x|,|y|<\tfrac14$.
Then the series in \eqref{kon} and \eqref{konec1} converge absolutely,
so rearrangements and the application of various transformations are justified.
The right-hand side of \eqref{koneccc} defines a holomorphic function on \( B^n \times B^n \) and agrees with \( K_h(x,y) \) on a nonempty open subset. By the identity principle (see~\cite[p.~210]{Rudin}), we conclude that \eqref{koneccc} holds for all \( x, y \in B^n \).
\end{proof}
\begin{remark}
    From Theorem~\ref{theorem necooo} we know that
    \begin{align*}
 K_h(x,y) 
&= 
\left(
    1 + 
    \frac{
        |x-y|^2
    }{
        (1 - |x|^{2})(1 - |y|^{2})
    }
\right)^{\!2 - \tfrac{3n}{2}}
\\[4pt]
&\quad \times
\sum_{p,l=0}^{\,n-1}
\frac{(1-n)_{p+l}}{p!\,l!}\,
\frac{(n-1)_p\,(n-1)_l}{\left(\tfrac{n}{2}\right)_{p+l}}
\left( \frac{|x|^{2}}{|x|^{2}-1} \right)^{\!p}
\left( \frac{|y|^{2}}{|y|^{2}-1} \right)^{\!l}
\\[4pt]
&\quad \times
{}_2F_1\!\left(
\begin{matrix}
1 + p - \tfrac{n}{2},\; 1 + l - \tfrac{n}{2} \\[4pt]
\tfrac{n}{2} + p + l
\end{matrix}
;\;
\frac{
    -|x-y|^2
}{
    (1 - |x|^{2})(1 - |y|^{2})
}
\right).
\end{align*}
\medskip
\noindent
Now, observe that when $n$ is even, the Gauss hypergeometric function
\begin{equation*}
{}_2F_1\!\left(
\begin{matrix}
1 + p - \tfrac{n}{2},\; 1 + l - \tfrac{n}{2} \\[4pt]
\tfrac{n}{2} + p + l
\end{matrix}
;\;
\frac{
    -|x-y|^2
}{
    (1 - |x|^{2})(1 - |y|^{2})
}
\right)
\end{equation*}
reduces to a polynomial whenever $p$ or $l < \tfrac{n}{2}$. 
In contrast, for $p, l \geq \tfrac{n}{2}$, the function ${}_2F_1$ admits the following representation (see~\cite[p.~110, Eq.~(10)]{BatemanHTF}):
\begin{equation*}\label{eq:derivative-identity-abc}
\begin{aligned}
{}_2F_1\!\left(
\begin{matrix}
a + 1,\; a + b + 1 \\[4pt]
a + b + c + 2
\end{matrix}
;\; z
\right)
&= 
\frac{(a + b + c + 1)!}{c!\, a!\, (a + b)!\, (b + c)!}\;
(-1)^{\,1 + b}\;
\frac{d^{\,a + b}}{dz^{\,a + b}}
\\[8pt]
&\quad \times 
\left\{
(1 - z)^{\,b + c}\;
\frac{d^{\,c}}{dz^{\,c}}
\left[
z^{-1}\log(1 - z)
\right]
\right\}.
\end{aligned}
\end{equation*}
Consequently, in even dimensions the kernel $K_h(x,y)$ admits a closed-form representation. This is in agreement with the results established by Stoll (see~\cite[Theorem~3.1]{Stoll2019}).
\end{remark}
\section{Szeg\"o kernel for linearly dependent arguments}
We now recall a proposition that will assist in analysing the Szegő kernel in the case \(y = \lambda x\).
\begin{proposition} \label{4.1} (see~\cite[Eq.~4.3]{ChoiRathie2013}).\label{thm:hypergeometric_identity}
  Let \(a,b,c \geq 0\) be parameters $c + \frac{1}{2}, \: 2c \notin \mathbb{Z}_{\leq 0}$, and let
  \(x,z\in\mathbb{R}\) satisfy
    $\:0 \le x < \tfrac14,\: 0 \le z < \tfrac14.$
  Then
  \begin{equation}\label{eq:hypergeometric_identity}
    (1+2x)^{-a}\,(1+2z)^{-b}\,F_1\!\left(
\begin{matrix}
c;\; a,\; b \\
2c
\end{matrix}
;\; \tfrac{4x}{1+2x},\,\tfrac{4z}{1+2z}
\right)
    \;=\;
    X_{9}\!\Bigl(a,\,b;\,c+\tfrac12;\,x^{2},\,2xz,\,z^{2}\Bigr).
  \end{equation}
\end{proposition}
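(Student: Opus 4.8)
The plan is to prove the identity by expanding both sides as power series in $x$ and $z$ and matching coefficients, using the Euler integral representation of the Appell function $F_1$ as the bridge. Since $c\ge 0$ together with $2c\notin\mathbb{Z}_{\le 0}$ forces $c>0$, the hypotheses of that integral are met, and I would first work under the auxiliary restriction $0\le x,z<\tfrac14$ of the statement, recovering the full range of admissible parameters at the very end by analytic continuation in $c$ (both sides being meromorphic in $c$ with poles confined to $2c,\,c+\tfrac12\in\mathbb{Z}_{\le 0}$, and agreeing on the open set $c>0$).

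First I would apply the Euler integral
$$F_1\!\left(\begin{matrix}c;\,a,\,b\\ 2c\end{matrix};u,v\right)=\frac{\Gamma(2c)}{\Gamma(c)^2}\int_0^1\bigl[t(1-t)\bigr]^{c-1}(1-ut)^{-a}(1-vt)^{-b}\,dt$$
with $u=\tfrac{4x}{1+2x}\in[0,1)$, $v=\tfrac{4z}{1+2z}\in[0,1)$, multiply through by $(1+2x)^{-a}(1+2z)^{-b}$, and absorb those factors using $(1+2x)\bigl(1-\tfrac{4x}{1+2x}t\bigr)=1+2x(1-2t)$. The substitution $t=\tfrac{1+s}{2}$ turns $t(1-t)$ into $\tfrac{1-s^2}{4}$ and $1-2t$ into $-s$, leaving
$$\text{LHS}=\frac{\Gamma(2c)}{2\cdot 4^{\,c-1}\,\Gamma(c)^2}\int_{-1}^{1}(1-s^2)^{c-1}(1-2xs)^{-a}(1-2zs)^{-b}\,ds .$$
Expanding $(1-2xs)^{-a}$ and $(1-2zs)^{-b}$ by the binomial series (valid and uniformly convergent since $|2xs|,|2zs|<\tfrac12$ on $[-1,1]$), integrating term by term against the integrable weight $(1-s^2)^{c-1}$, and using $\int_{-1}^{1}(1-s^2)^{c-1}s^{k}\,ds=0$ for odd $k$ and $=B\!\bigl(\tfrac{k+1}{2},c\bigr)$ for $k=2j$, the constants collapse after invoking the Legendre duplication formula $\Gamma(2c)=\tfrac{2^{2c-1}}{\sqrt\pi}\Gamma(c)\Gamma(c+\tfrac12)$ together with $\Gamma(j+\tfrac12)=\tfrac{(2j)!\sqrt\pi}{4^{j}j!}$, and one is left with
$$\text{LHS}=\sum_{\substack{m,n\ge 0\\ m+n\ \text{even}}}\frac{(a)_m\,(b)_n}{m!\,n!}\,\frac{(m+n)!}{\bigl(\tfrac{m+n}{2}\bigr)!\,\bigl(c+\tfrac12\bigr)_{(m+n)/2}}\,x^m z^n .$$

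On the other side, in the triple series for $X_9\!\bigl(a,b;c+\tfrac12;x^2,2xz,z^2\bigr)$ I would set $m=2p+q$ and $n=2l+q$; then $p+q+l=\tfrac{m+n}{2}$ for every admissible $q$, the Pochhammer factors become $(a)_m$ and $(b)_n$, and the middle variable contributes $2^q$. Comparing the coefficient of $x^m z^n$ on the two sides then reduces the whole proposition to the elementary identity
$$\frac{(2j)!}{m!\,n!}=\sum_{q}2^{q}\binom{j}{\tfrac{m-q}{2},\,q,\,\tfrac{n-q}{2}}\qquad(m+n=2j),$$
which is nothing but the coefficient of $t^{m}$ in the trivial identity $(1+t)^{2j}=(1+2t+t^{2})^{j}$, the right-hand side expanded by the multinomial theorem. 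I expect this last step to be the only place where care is genuinely needed — getting the summation range for $q$ to match on the two sides and recognizing that the combinatorial sum is exactly the multinomial expansion of $(1+2t+t^2)^{j}$; the manipulations of the Euler integral in the earlier steps are routine, the one subtlety there being the justification of the term-by-term integration, which is precisely why the hypotheses $0\le x,z<\tfrac14$ and $c>0$ are imposed. Finally I would invoke analytic continuation in $c$ to drop the auxiliary restrictions and obtain the proposition as stated.
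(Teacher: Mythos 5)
Your proposal is correct, but it is worth pointing out that the paper does not prove this statement at all: Proposition~\ref{thm:hypergeometric_identity} is simply imported from Choi--Rathie \cite[Eq.~4.3]{ChoiRathie2013}, whose own derivation proceeds by series manipulations and classical summation/transformation formulas rather than by an integral representation. What you supply is therefore a self-contained alternative: the Euler integral for $F_1$ with $\gamma=2c$, the symmetrizing substitution $t=\tfrac{1+s}{2}$ which kills the odd moments against the weight $(1-s^2)^{c-1}$, the Legendre duplication formula (which is exactly where the parameter $c+\tfrac12$ of $X_9$ is produced), and finally the multinomial identity encoded in $(1+t)^{2j}=(1+2t+t^2)^j$ to match the coefficient of $x^mz^n$ with the $q$-sum coming from the middle variable $2xz$. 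I checked the bookkeeping: the constant collapses to $\frac{(2j)!}{j!\,(c+\tfrac12)_j}$ with $j=\tfrac{m+n}{2}$, the index change $m=2p+q$, $n=2l+q$ gives $p+q+l=j$ for every admissible $q$, and the resulting combinatorial identity is precisely the one you state, so the proof goes through. Your approach buys a proof within the paper (and makes transparent why only even total degrees appear in $X_9(a,b;c+\tfrac12;x^2,2xz,z^2)$), at the cost of a longer argument than the one-line citation; the only superfluous element is your closing appeal to analytic continuation in $c$, since the hypotheses $c\ge 0$, $2c\notin\mathbb{Z}_{\le 0}$ already force $c>0$, so the Euler integral covers the full admissible range directly.
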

  \begin{corollary}\label{prop:szego_radial}
      Let \(y\in B^n\) and \(0\leq \lambda \leq 1\).
      Then for \(x=\lambda y\) the Szegö kernel admits the representation
  $$
    K_h(x,y)
    =\frac{(1-|x|^2)^{n-1}\,(1-|y|^2)^{n-1}}
          {(1+|x|)^{2(n-1)}\,(1+|y|)^{2(n-1)}}
     \,
     F_1\!\left(
\begin{matrix}
\frac{n - 1}{2};\; n - 1,\; n - 1 \\
n - 1
\end{matrix}
;\; \frac{4|x|}{(1 + |x|)^2},\; \frac{4|y|}{(1 + |y|)^2}
\right).
$$
  \end{corollary}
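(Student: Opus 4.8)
The plan is to specialize the series formula of Corollary~\ref{cor:szego_kernel} to the line $x=\lambda y$ and then to collapse the resulting ``diagonal'' $X_9$ series by means of the Choi--Rathie identity recorded in Proposition~\ref{thm:hypergeometric_identity}.

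First I would observe that for $x=\lambda y$ with $0\le\lambda\le1$ the vectors $x,y$ are parallel with the same orientation, so $\langle x,y\rangle=|x|\,|y|$. Setting $u:=|x|/(1+|x|^2)$ and $v:=|y|/(1+|y|^2)$, the arguments appearing in Corollary~\ref{cor:szego_kernel} become $X=u^{2}$, $Z=v^{2}$ and $Y=2uv$, so that
\begin{equation*}
  K_h(x,y)=\frac{(1-|x|^2)^{n-1}(1-|y|^2)^{n-1}}{(1+|x|^2)^{n-1}(1+|y|^2)^{n-1}}\;
  X_9\!\bigl(n-1,\,n-1;\,\tfrac n2;\,u^{2},\,2uv,\,v^{2}\bigr).
\end{equation*}

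Next I would apply Proposition~\ref{thm:hypergeometric_identity} with $a=b=n-1$ and $c=\tfrac{n-1}{2}$, so that $c+\tfrac12=\tfrac n2$ and $2c=n-1$; since $n\ge2$ we have $a,b,c\ge0$ and both $c+\tfrac12$ and $2c$ lie outside $\mathbb{Z}_{\le0}$, so the parameter hypotheses hold. With the variables $x,z$ of that proposition replaced by $u,v$, this converts the diagonal $X_9$ into $(1+2u)^{-(n-1)}(1+2v)^{-(n-1)}\,F_1\bigl(\tfrac{n-1}{2};\,n-1,\,n-1;\,n-1;\,\tfrac{4u}{1+2u},\,\tfrac{4v}{1+2v}\bigr)$. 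It then remains only to simplify via the elementary identities $1+2u=(1+|x|)^{2}/(1+|x|^{2})$ and $4u/(1+2u)=4|x|/(1+|x|)^{2}$ (and their analogues for $v$): the factor $(1+2u)^{-(n-1)}=\bigl((1+|x|^{2})/(1+|x|)^{2}\bigr)^{n-1}$ cancels the $(1+|x|^{2})^{n-1}$ in the denominator above, and likewise for $y$, leaving exactly the asserted formula.

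The one point requiring care --- and the main obstacle --- is the range of validity. Proposition~\ref{thm:hypergeometric_identity} is stated only for $0\le u<\tfrac14$ and $0\le v<\tfrac14$, whereas $u=|x|/(1+|x|^{2})$ runs over all of $[0,\tfrac12)$ as $|x|$ runs over $[0,1)$; thus the computation above is literally justified only for $|x|,|y|$ small (say $<2-\sqrt3$). To reach the general case I would argue exactly as in the proof of Theorem~\ref{theorem necooo}: the map $t\mapsto 4t/(1+t)^{2}$ carries $[0,1)$ into $[0,1)$, so the right-hand side is a convergent Appell series and defines a real-analytic function of $(|x|,|y|)$ on $[0,1)^{2}$, while the left-hand side is real-analytic there by Corollary~\ref{cor:szego_kernel}; since the two sides agree on a nonempty open set, the identity principle forces equality for all $x=\lambda y\in B^n$.
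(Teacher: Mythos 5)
Your proposal is correct and follows essentially the same route as the paper, whose proof is simply the one-line observation that the claim is a direct consequence of the $X_9$ representation of $K_h$ (Corollary~\ref{cor:szego_kernel}) and the Choi--Rathie identity (Proposition~\ref{thm:hypergeometric_identity}), exactly as you carry out with $a=b=n-1$, $c=\tfrac{n-1}{2}$ and the simplifications $1+2u=(1+|x|)^2/(1+|x|^2)$, $4u/(1+2u)=4|x|/(1+|x|)^2$. Your additional treatment of the restriction $u,v<\tfrac14$ (valid only for $|x|,|y|<2-\sqrt3$) via real-analytic continuation is a detail the paper leaves implicit, and it is handled correctly.
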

\begin{proof}
 Direct consequence of Theorem 2.2 and Proposition \ref{thm:hypergeometric_identity}.
\end{proof}

\begin{remark}
When \( x = y \), the Appell function \( F_1 \) simplifies according to the classical identity (see~\cite[p.~239, Eq.~(11)]{BatemanHTF}):
\begin{equation}\label{jkl}
F_1\!\left(
    \begin{matrix}
        \alpha;\; \beta,\; \beta' \\[2pt]
        \gamma
    \end{matrix}
    ;\;
    z, z
\right)
=
{}_2F_1\!\left(
    \begin{matrix}
        \alpha,\; \beta + \beta' \\[2pt]
        \gamma
    \end{matrix}
    ;\;
    z
\right).
\end{equation}
Consequently, the diagonal value of the Szeg\H{o} kernel simplifies to
$$
K_h(x, x)
= \frac{(1 - |x|^2)^{2(n - 1)}}{(1 + |x|)^{4(n - 1)}}
\, {}_2F_1\!\left(
    \begin{matrix}
        \tfrac{n - 1}{2},\; 2(n - 1) \\[2pt]
        n - 1
    \end{matrix}
    ;\;
    \dfrac{4|x|}{(1 + |x|)^2}
\right).
$$
To simplify further we employ the following quadratic transformation~\cite[p.~64, Eq.~(24)]{BatemanHTF}:
$$
{}_2F_1\!\left(
\begin{matrix}
a,\; b \\
2a
\end{matrix}
;\; \frac{4t}{(1+t)^2}
\right)
=
(1+t)^{2b}\;
{}_2F_1\!\left(
\begin{matrix}
b,\; b-a+\tfrac12 \\
a+\tfrac12
\end{matrix}
;\; t^2
\right),
$$
which yields
$$
{}_2F_1\!\left(
\begin{matrix}
\frac{n-1}{2},\; 2(n-1) \\
n-1
\end{matrix}
;\; \frac{4|x|}{(1+|x|)^2}
\right)
=
(1+|x|)^{4(n-1)}
\;
{}_2F_1\!\left(
\begin{matrix}
2(n-1),\; \frac{3n-2}{2} \\
\frac{n}{2}
\end{matrix}
;\; |x|^2
\right).
$$
Subsequently, applying the 
we arrive at
$$
K_h(x,x) = (1-|x|^2)^{-(n-1)}
\,{}_2F_1\!\left( \begin{matrix} 2-\frac{3n}{2},\; -(n-1) \\[2pt] \frac{n}{2} \end{matrix} ;\; |x|^2 \right).
$$
This is in precise accordance with result established by Stoll (see~\cite[Theorem 4.1]{Stoll2019}).\\
This result can also be derived from Theorem~\ref{theorem necooo}. 
When $x = y$,~\eqref{szego_roz} reduces to 
\begin{equation*}
    K_h(x,x) 
    = 
    F_{1}\!\left(
    \begin{matrix}
    1 - n;\; n - 1,\, n - 1 \\[4pt]
    \dfrac{n}{2}
    \end{matrix}
    ;\;
    \dfrac{|x|^{2}}{1 - |x|^{2}},\;
    \dfrac{|x|^{2}}{1 - |x|^{2}}
    \right).
\end{equation*}
Applying the transformation formula~\cite[p.~239, Eq.~(1)]{BatemanHTF}:
\begin{equation*}
F_{1}\!\left(
\begin{matrix}
\gamma - \alpha;\; \beta,\, \beta' \\[4pt]
\gamma
\end{matrix}
;\;
\dfrac{x}{\,x - 1\,},\;
\dfrac{y}{\,y - 1\,}
\right) 
= 
(1 - x)^{\beta}\,(1 - y)^{\beta'}\,
F_{1}\!\left(
\begin{matrix}
\alpha;\; \beta,\, \beta' \\[4pt]
\gamma
\end{matrix}
;\; x,\, y
\right),
\end{equation*}
we obtain
\begin{equation*}
\small
F_{1}\!\left(
\begin{matrix}
1 - n;\; n - 1,\, n - 1 \\[4pt]
\dfrac{n}{2}
\end{matrix}
;\;
\dfrac{|x|^{2}}{|x|^{2} - 1},\;
\dfrac{|x|^{2}}{|x|^{2} - 1}
\right)
=
\bigl(1-|x|^{2}\bigr)^{2(n - 1)}\,
F_{1}\!\left(
\begin{matrix}
\dfrac{3n}{2} - 1;\; n - 1,\, n - 1 \\[4pt]
\dfrac{n}{2}
\end{matrix}
;\;
|x|^{2},\;
|x|^{2}
\right).
\end{equation*}
Finally, the result follows by applying~\eqref{jkl}, followed by the Euler transformation.
\end{remark}
\section{Bergman reproducing kernel}
We now provide a series expansion for the weighted Bergman kernel.
\begin{theorem} \label{teylorbergman}
Let \( x, y \in B^n \). Then the following series expansion holds:
\begin{align*}
K_h^{s}(x, y)
&= 
\frac{(1 - |x|^2)^{n - 1} (1 - |y|^2)^{n - 1}}
     {(1 + |x|^2)^{n - 1} (1 + |y|^2)^{n - 1}}
\sum_{\gamma = 0}^\infty
\sum_{\alpha = 0}^\infty
\sum_{\beta = 0}^\infty
\mathcal{A}_{\alpha,\,\beta,\,\gamma}\,
X^{\alpha}\,
Y^{\gamma}\,
Z^{\beta}.
\end{align*}
 where
$$
    X = \left(\frac{\lvert x\rvert}{1+\lvert x\rvert^2}\right)^2,\quad
    Y = \frac{2\langle x,y\rangle}{(1+\lvert x\rvert^2)\,(1+\lvert y\rvert^2)},\quad
    Z = \left(\frac{\lvert y\rvert}{1+\lvert y\rvert^2}\right)^2
$$
and 
\begin{align*}
\mathcal{A_{\alpha, \,\beta, \,\gamma}} &:= \sum_{j=0}^{\min(\alpha,\beta)}
\frac{(-1)^j\big(\frac{n-2}{2}\big)_{\gamma+j}\,[n-2+2(\gamma+2j)]}
     {I_{\gamma+2j}(s)\,(n-2)\,j!\,\gamma!}\\
&\quad\times
\frac{(n-1)_{2\alpha+\gamma}(-\alpha)_j}{(\frac{n}{2})_{\alpha+\gamma+j}\,\alpha!}
\frac{(n-1)_{2\beta+\gamma}(-\beta)_j}{(\frac{n}{2})_{\beta+\gamma+j}\,\beta!}.
\end{align*}
\end{theorem}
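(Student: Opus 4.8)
The plan is to start from the known Fourier-type expansion of the weighted Bergman kernel recalled in the introduction, namely
$$
K_h^s(x,y)=\sum_{m=0}^\infty \frac{S_m(|x|^2)\,S_m(|y|^2)}{I_m(s)}\,Z_m(x,y),
$$
and to convert it, term by term, into a power series in the three invariants $X,Y,Z$. The natural bridge is Theorem~\ref{Pr1} (equivalently Corollary~\ref{cor:szego_kernel}): that computation shows how an integral over $\partial B^n$ of a product of two reciprocal powers of $|x-\eta|$ and $|y-\eta|$ expands as an $X_9$ series in $X,Y,Z$, and the same mechanism — the $\eta$-integral of $Z_m(x,\eta)Z_m(y,\eta)$ against nothing, using that $Z_m$ is the reproducing kernel of $\mathcal H_m(\partial B^n)$ — gives the addition formula for zonal harmonics. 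So the first step is to record the expansion of $Z_m(x,y)$ itself as a finite sum over an auxiliary index $j$ of monomials $X^{\alpha}Y^{\gamma}Z^{\beta}$ with $m$ related to $\gamma+2j$; the Gegenbauer polynomial formula quoted at the end of Section~2, together with the homogeneity of $Z_m$, is what produces the factors $(\frac{n-2}{2})_{\gamma+j}$, the $[n-2+2(\gamma+2j)]$ coming from the $\frac{m+\lambda}{\lambda}$ prefactor in \eqref{zonal} with $\lambda=\frac{n-2}{2}$ and $m=\gamma+2j$, and the $(-1)^j/j!$ from the alternating Gegenbauer coefficients.

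Next I would expand the radial factors $S_m(|x|^2)$. By definition $S_m(t)=\frac{(n-1)_m}{(n/2)_m}\,{}_2F_1(m,1-\tfrac n2;m+\tfrac n2;t)$, and applying the quadratic/Euler-type transformation already used in the proof of Theorem~\ref{theorem necooo} (the identity \eqref{Gauss}, run with the prefactors $(1-|x|^2)^{n-1}/(1+|x|^2)^{n-1}$ pulled out front) rewrites $S_m(|x|^2)$, after clearing $1\pm|x|^2$ powers, as a single power series in $X=\bigl(\tfrac{|x|}{1+|x|^2}\bigr)^2$ whose coefficients involve $(n-1)_{2\alpha+m}/((\tfrac n2)_{\alpha+m}\,\alpha!)$; this is exactly the shape of the $\alpha$-dependent block in $\mathcal A_{\alpha,\beta,\gamma}$ with $m=\gamma+2j$, and the factor $(-\alpha)_j$ is the residue of the quadratic transformation when the ${}_2F_1$ parameter $1-\tfrac n2$ interacts with the summation over $j$. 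The same computation on $S_m(|y|^2)$ yields the $\beta$-block, producing the symmetric factor $(n-1)_{2\beta+\gamma}(-\beta)_j/((\tfrac n2)_{\beta+\gamma+j}\,\beta!)$.

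Having replaced each of the three factors $S_m(|x|^2)$, $S_m(|y|^2)$, $Z_m(x,y)$ by its power series, I would substitute everything into $\sum_m (\dots)/I_m(s)$, pull out the common prefactor $\tfrac{(1-|x|^2)^{n-1}(1-|y|^2)^{n-1}}{(1+|x|^2)^{n-1}(1+|y|^2)^{n-1}}$, and then reorganize the resulting quadruple sum (over $m$ and the three expansion indices) by collecting the coefficient of $X^\alpha Y^\gamma Z^\beta$. The index bookkeeping is that $m=\gamma+2j$ and that $j$ cannot exceed either $\alpha$ or $\beta$ (this is forced by the $(-\alpha)_j$ and $(-\beta)_j$ vanishing beyond $\min(\alpha,\beta)$), which is precisely why the inner sum in $\mathcal A_{\alpha,\beta,\gamma}$ runs from $j=0$ to $\min(\alpha,\beta)$ and carries $I_{\gamma+2j}(s)$ in the denominator. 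For the rearrangements and interchange of summations I would invoke, exactly as in Theorem~\ref{theorem necooo}, that for $|x|,|y|$ small enough all series converge absolutely, carry out the manipulations there, and extend to all of $B^n\times B^n$ by the identity principle (the convergence of $\sum 1/I_m(s)(\cdots)$ being guaranteed because it is a reproducing kernel series); alternatively one notes $\mathcal A_{\alpha,\beta,\gamma}$ is manifestly symmetric in $(\alpha,\beta)\leftrightarrow(\beta,\alpha)$, matching the symmetry of $K_h^s$. The main obstacle I anticipate is purely combinatorial: correctly matching the Pochhammer symbols through the quadratic transformation and the Gegenbauer addition formula so that the triple identity $(n-1)_{2\alpha+\gamma}$, $(\tfrac{n-2}{2})_{\gamma+j}$, and the normalizing $[n-2+2(\gamma+2j)]/((n-2)\,\gamma!)$ all emerge with the right signs and factorials — a bookkeeping task rather than a conceptual one, but easy to get wrong.
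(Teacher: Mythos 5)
Your proposal is correct and follows essentially the same route as the paper: start from Stoll's expansion $\sum_{m} S_m(|x|^2)S_m(|y|^2)Z_m(x,y)/I_m(s)$, expand $Z_m$ through the Gegenbauer formula with $m=\gamma+2j$, rewrite $S_m$ via the quadratic and Euler transformations so that the prefactor $\tfrac{(1-|x|^2)^{n-1}(1-|y|^2)^{n-1}}{(1+|x|^2)^{n-1}(1+|y|^2)^{n-1}}$ comes out and the remaining series are in $X$ and $Z$, then collect the coefficient of $X^{\alpha}Y^{\gamma}Z^{\beta}$. The only slip is attributing $(-\alpha)_j$ to the quadratic transformation; in the paper it arises purely from the reindexing $\alpha=k+j$, $\beta=l+j$ via $(\alpha-j)!=(-1)^j\alpha!/(-\alpha)_j$ (which is also what forces $j\le\min(\alpha,\beta)$), a bookkeeping point rather than a gap.
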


\begin{proof}
    From~\cite[Theorem~5.3]{Stoll2019} it follows that
\begin{equation}\label{vazenejadro}
    K_h^{s}(x,y)
    =
    \sum_{m = 0}^{\infty}
    \frac{
        S_m(|x|^{2})\, S_m(|y|^{2})\, Z_m(x,y)
    }{
        I_m(s)
    },
\end{equation}
where \(I_m(s)\) is given by
\begin{equation*}
    I_m(s)
    :=
    \frac{
        \Gamma\!\left( \tfrac{n}{2} + s + 1 \right)
    }{
        \Gamma\!\left( \tfrac{n}{2} \right)
        \Gamma(s + 1)
    }
    \int_{0}^{1}
        t^{\,m + \tfrac{n}{2} - 1}
        (1 - t)^{s}
        \, S_m(t)^{2}
    \, dt,
\end{equation*}
and \(S_m(t)\) is defined by
\begin{equation*}
    S_m(t)
    :=
    \frac{(n - 1)_{m}}{ \bigl( \tfrac{n}{2} \bigr)_{m} }\;
    {}_2F_1\!\left(
        \begin{matrix}
            m,\; 1 - \tfrac{n}{2} \\[4pt]
            m + \tfrac{n}{2}
        \end{matrix}
        ;\;
        t
    \right).
\end{equation*}
Here \(Z_m(x,y)\) denotes the zonal harmonic of degree~\(m\).\\
From the homogeneity of $Z_m(x,y)$ and \eqref{zonal} we have
    \begin{align*}
         Z_m(x,y) &= \frac{n-2+2m}{n-2}|x|^m|y|^mC_m^{\frac{n-2}{2}}\Big(\frac{\langle x,y\rangle}{|x||y|}\Big)\\[1.0ex]
         &= \frac{n-2+2m}{n-2}|x|^m|y|^m\sum_{j=0}^{\lfloor m/2\rfloor}
\frac{(-1)^j\,\Gamma(m-j+\frac{n-2}{2})}{j!\,(m-2j)!\,\Gamma(\frac{n-2}{2})}
\,\bigg(\frac{2\langle x,y\rangle}{|x||y|}\Bigg)^{m-2j}.
    \end{align*}
To simplify \(S_m(t)\), we first apply the quadratic transformation (see~\cite[p.~64]{BatemanHTF}):
$$
{}_2F_1\!\left(
\begin{matrix}
a,\; b \\[2pt]
a - b + 1
\end{matrix}
;\, z
\right)
=
(1 + z)^{-a}
\cdot
{}_2F_1\!\left(
\begin{matrix}
\tfrac{1}{2}a,\; \tfrac{1}{2}a + \tfrac{1}{2} \\[2pt]
a - b + 1
\end{matrix}
;\,
\frac{4z}{(1 + z)^2}
\right).
$$
Thus, we obtain
$$
S_m(t)
=
\frac{(n - 1)_m}{\left( \tfrac{n}{2} \right)_m}
\cdot
(1 + t)^{-m}
\cdot
{}_2F_1\!\left(
\begin{matrix}
\tfrac{m}{2},\; \tfrac{m}{2} + \tfrac{1}{2} \\
m + \tfrac{n}{2}
\end{matrix}
;\,
\frac{4t}{(1 + t)^2}
\right).
$$
Next, we apply Euler's transformation and see that \(S_m(t)\) can be written as
$$
S_m(t)
=
\frac{(n - 1)_{m}}{\left( \tfrac{n}{2} \right)_{m}}
(1 + t)^{-m}
\left( \frac{1 - t}{1 + t} \right)^{n - 1}
{}_2F_1\!\left(
\begin{matrix}
\tfrac{m}{2} + \tfrac{n}{2},\; \tfrac{m}{2} + \tfrac{n - 1}{2} \\[4pt]
m + \tfrac{n}{2}
\end{matrix}
;\;
\dfrac{4t}{(1 + t)^{2}}
\right).
$$
Using this to expand \eqref{vazenejadro} yields in:
\begin{align*}
K_h^{s}(x,y)
&=
\frac{(1 - |x|^2)^{n - 1} (1 - |y|^2)^{n - 1}}{(1 + |x|^2)^{n - 1} (1 + |y|^2)^{n - 1}}
\sum_{m=0}^\infty
\sum_{k=0}^\infty
\sum_{l=0}^\infty
\sum_{j=0}^{\lfloor m/2 \rfloor}
\frac{1}{I_m(s)}
\cdot
\left[ \frac{(n - 1)_m}{\left( \tfrac{n}{2} \right)_m} \right]^2
\\[1ex]
&\quad\times
\frac{\left( \tfrac{m}{2} + \tfrac{n}{2} \right)_k \left( \tfrac{m}{2} + \tfrac{n - 1}{2} \right)_k}{\left( m + \tfrac{n}{2} \right)_k\,k!}
\left( \frac{4|x|^2}{(1 + |x|^2)^2} \right)^k
(1 + |x|^2)^{-m}
\\[1ex]
&\quad\times
\frac{\left( \tfrac{m}{2} + \tfrac{n}{2} \right)_l \left( \tfrac{m}{2} + \tfrac{n - 1}{2} \right)_l}{\left( m + \tfrac{n}{2} \right)_l\,l!}
\left( \frac{4|y|^2}{(1 + |y|^2)^2} \right)^l
(1 + |y|^2)^{-m}
\\[1ex]
&\quad\times
\frac{n - 2 + 2m}{n - 2}
\cdot |x|^{m} |y|^{m}
\cdot \frac{(-1)^j\,\Gamma\left( m - j + \tfrac{n - 2}{2} \right)}{j!\,(m - 2j)!\,\Gamma\left( \tfrac{n - 2}{2} \right)}
\cdot \bigg(\frac{ 2\langle x, y \rangle }{|x||y|}\bigg)^{m - 2j}.\\
\end{align*}
Using the identities
$$
\left( \tfrac{m}{2} + \tfrac{n}{2} \right)_k \left( \tfrac{m}{2} + \tfrac{n - 1}{2} \right)_k \cdot 4^k = ( m +n - 1)_{2k}, \qquad
\left( m + \tfrac{n}{2} \right)_m \left( \tfrac{n}{2} \right)_k = \left( \tfrac{n}{2} \right)_{m + k},
$$
together with
$$
(m + n - 1)_{2k} \cdot (n - 1)_m = (n - 1)_{2k + m},\quad\quad\frac{\Gamma(m-j+\frac{n-1}{2})}{\Gamma(\frac{n-1}{2})} = \bigg(\frac{n-1}{2}\bigg)_{m-j},
$$
and analogously for the index $l$, we obtain:
\begin{align*}    
K_h^{s}(x,y)
&=
\frac{(1 - |x|^2)^{n - 1} (1 - |y|^2)^{n - 1}}{(1 + |x|^2)^{n - 1} (1 + |y|^2)^{n - 1}}
\sum_{m=0}^\infty
\sum_{k=0}^\infty
\sum_{l=0}^\infty
\sum_{j=0}^{\lfloor m/2 \rfloor}
\frac{1}{I_m(s)}
\cdot
\\[1ex]
&\quad\times
\frac{\left(n - 1 \right)_{2k+m}}{\left(\tfrac{n}{2} \right)_{k+m}\,k!}
|x|^{2k+2j}
(1 + |x|^2)^{-2k-m}
\\[1ex]
&\quad\times
\frac{\left(n -1 \right)_{2l+m} }{\left(\tfrac{n}{2} \right)_{l+m}\,l!}
|y|^{2l+2j}
(1 + |y|^2)^{-2l-m}
\\[1ex]
&\quad\times
\frac{n - 2 + 2m}{n - 2}
\cdot \frac{(-1)^j\,(\frac{n-2}{2})_{m-j}}{j!\,(m - 2j)!\,}
\cdot \big(2\langle x, y \rangle\big)^{m - 2j}.
\end{align*}
Now setting $\gamma = m - 2j$ and summing over $\gamma$ yields:
\begin{align*}
K_h^{s}(x,y)
&=\frac{(1 - |x|^2)^{n - 1} (1 - |y|^2)^{n - 1}}{(1 + |x|^2)^{n - 1} (1 + |y|^2)^{n - 1}}
\sum_{\gamma = 0}^\infty
\sum_{j = 0}^\infty
\sum_{k = 0}^\infty
\sum_{l = 0}^\infty
\frac{1}{I_{\gamma + 2j}(s)}
\\[1ex]
&\quad\times
\frac{(n - 1)_{2k + \gamma + 2j}}{\left( \tfrac{n}{2} \right)_{k + \gamma + 2j} \, k!}
\bigg(\frac{|x|}{(1 + |x|^2)}\bigg)^{2k + 2j}
\\[1ex]
&\quad\times
\frac{(n - 1)_{2l + \gamma + 2j}}{\left( \tfrac{n}{2} \right)_{l + \gamma + 2j} \, l!}
\bigg(\frac{|y|}{(1 + |y|^2)}\bigg)^{2k + 2j}
\\[1ex]
&\quad\times
\frac{n - 2 + 2(\gamma + 2j)}{n - 2}
\cdot
\frac{(-1)^j \left( \tfrac{n - 2}{2} \right)_{\gamma + j}}{j! \cdot \gamma!}
\cdot \bigg(\frac{ 2 \langle x, y \rangle}{(1 + |x|^2)(1 + |y|^2)}\bigg)^{\gamma}.
\end{align*}
Now making a substitution \(\alpha = k + j\) and \(\beta = l + j\), the $K_h^{s}(x, y)$ becomes:

\begin{align}\label{finaldestination}
&\frac{(1 - |x|^2)^{n - 1} (1 - |y|^2)^{n - 1}}{(1 + |x|^2)^{n - 1} (1 + |y|^2)^{n - 1}}
\sum_{\gamma = 0}^\infty
\sum_{\alpha = 0}^\infty
\sum_{\beta = 0}^\infty
\sum_{j = 0}^{\min(\alpha,\, \beta)}
\frac{1}{I_{\gamma + 2j}(s)}
\\[1ex]\notag
&\quad\times
\frac{(n - 1)_{2\alpha + \gamma}}{\left( \tfrac{n}{2} \right)_{\alpha + \gamma + j} \, (\alpha - j)!}
\cdot \left( \frac{|x|}{1 + |x|^2} \right)^{2\alpha}
\\[1ex]\notag
&\quad\times
\frac{(n - 1)_{2\beta + \gamma}}{\left( \tfrac{n}{2} \right)_{\beta + \gamma + j} \, (\beta - j)!}
\cdot \left( \frac{|y|}{1 + |y|^2} \right)^{2\beta}
\\[1ex]\notag
&\quad\times
\frac{n - 2 + 2(\gamma + 2j)}{n - 2}
\cdot
\frac{(-1)^j \left( \tfrac{n - 2}{2} \right)_{\gamma + j}}{j! \cdot \gamma!}
\cdot \left( \frac{2 \langle x, y \rangle}{(1 + |x|^2)(1 + |y|^2)} \right)^{\gamma}.
\end{align}
Using the identity $(\alpha -j)! = \frac{(-1)^j}{(-\alpha)_j}\alpha!$ and similarly for $(\beta-j)!$ we can write
\begin{align*}
K_h^{s}(x,y)
&= \frac{(1 - |x|^2)^{n - 1} (1 - |y|^2)^{n - 1}}{(1 + |x|^2)^{n - 1} (1 + |y|^2)^{n - 1}}
\sum_{\gamma = 0}^\infty
\sum_{\alpha = 0}^\infty
\sum_{\beta = 0}^\infty
\sum_{j = 0}^{\min(\alpha,\, \beta)}
\frac{1}{I_{\gamma + 2j}(s)}
\\[1ex]
&\quad\times
\frac{(n - 1)_{2\alpha + \gamma}(-\alpha)_j}{\left( \tfrac{n}{2} \right)_{\alpha + \gamma + j} \, \alpha!}
\cdot \left( \frac{|x|}{1 + |x|^2} \right)^{2\alpha}
\\[1ex]
&\quad\times
\frac{(n - 1)_{2\beta + \gamma}(-\beta)_j}{\left( \tfrac{n}{2} \right)_{\beta + \gamma + j} \, \beta!}
\cdot \left( \frac{|y|}{1 + |y|^2} \right)^{2\beta}
\\[1ex]
&\quad\times
\frac{n - 2 + 2(\gamma + 2j)}{n - 2}
\cdot
\frac{(-1)^j \left( \tfrac{n - 2}{2} \right)_{\gamma + j}}{j! \cdot \gamma!}
\cdot \left( \frac{2 \langle x, y \rangle}{(1 + |x|^2)(1 + |y|^2)} \right)^{\gamma},
\end{align*}
as claimed.
\end{proof}
\begin{remark}
The Hardy space inner product corresponds to $I_m(s)=1$ $\forall m$ and $K_h^{s}(x,y)$ reduces to the Szeg\H{o} kernel $K_h(x,y)$.\\
To see this, observe that
\begin{align*}
K_h^{s}(x,y)
&= \frac{(1 - |x|^2)^{n-1} (1 - |y|^2)^{n-1}}
         {(1 + |x|^2)^{n-1} (1 + |y|^2)^{n-1}}
\sum_{\gamma=0}^\infty
\sum_{\alpha=0}^\infty
\sum_{\beta=0}^\infty
\sum_{j=0}^{\min(\alpha,\beta)}
\\[1ex]
&\qquad\times
\frac{(n-1)_{2\alpha+\gamma}\,(-\alpha)_j}
     {\bigl(\tfrac{n}{2}\bigr)_{\alpha+\gamma+j}\,\alpha!}
\left( \frac{|x|}{1 + |x|^2} \right)^{2\alpha}
\cdot
\frac{(n-1)_{2\beta+\gamma}\,(-\beta)_j}
     {\bigl(\tfrac{n}{2}\bigr)_{\beta+\gamma+j}\,\beta!}
\left( \frac{|y|}{1 + |y|^2} \right)^{2\beta}
\\[1ex]
&\qquad\times
\frac{n-2+2(\gamma+2j)}{n-2}\,
\frac{(-1)^j \bigl(\tfrac{n-2}{2}\bigr)_{\gamma+j}}
     {j!\,\gamma!}
\left( \frac{2\langle x, y\rangle}
            {(1 + |x|^2)(1 + |y|^2)} \right)^{\gamma}.
\end{align*}
Using
$$
n-2+2(\gamma+2j)
=(n-2+2\gamma)\,
\frac{
\Gamma\!\left(\tfrac{n-2+2\gamma}{4}+1+j\right)\Gamma\!\left(\tfrac{n-2+2\gamma}{4}\right)
}{
\Gamma\!\left(\tfrac{n-2+2\gamma}{4}+1\right)\Gamma\!\left(\tfrac{n-2+2\gamma}{4}+j\right)
}
=(n-2+2\gamma)\,
\frac{\bigl(\tfrac{n-2+2\gamma}{4}+1\bigr)_j}{\bigl(\tfrac{n-2+2\gamma}{4}\bigr)_j},
$$
we can rewrite
\begin{align*}
K_h^{s}(x,y)
&=
\frac{(1-|x|^2)^{n-1}(1-|y|^2)^{n-1}}{(1+|x|^2)^{n-1}(1+|y|^2)^{n-1}}
\sum_{\gamma=0}^\infty\sum_{\alpha=0}^\infty\sum_{\beta=0}^\infty\sum_{j=0}^{\min(\alpha,\beta)}
\\[2pt]
&\quad\times
\frac{(n-1)_{2\alpha+\gamma}\,(-\alpha)_j}{\bigl(\tfrac{n}{2}\bigr)_{\alpha+\gamma+j}\,\alpha!}
\left(\frac{|x|}{1+|x|^2}\right)^{2\alpha}
\frac{(n-1)_{2\beta+\gamma}\,(-\beta)_j}{\bigl(\tfrac{n}{2}\bigr)_{\beta+\gamma+j}\,\beta!}
\left(\frac{|y|}{1+|y|^2}\right)^{2\beta}
\\[2pt]
&\quad\times
\frac{n-2+2\gamma}{n-2}\,
\frac{\bigl(\tfrac{n-2+2\gamma}{4}+1\bigr)_j}{\bigl(\tfrac{n-2+2\gamma}{4}\bigr)_j}\,
\frac{(-1)^j\bigl(\tfrac{n-2}{2}\bigr)_{\gamma+j}}{j!\,\gamma!}
\left(\frac{2\langle x,y\rangle}{(1+|x|^2)(1+|y|^2)}\right)^{\gamma}
\\[6pt]
&=
\frac{(1-|x|^2)^{n-1}(1-|y|^2)^{n-1}}{(1+|x|^2)^{n-1}(1+|y|^2)^{n-1}}\\
&\quad\times\sum_{\gamma,\alpha,\beta}^\infty
\frac{(n-1)_{2\alpha+\gamma}(n-1)_{2\beta+\gamma}}{\alpha!\,\beta!\,\gamma!}
\left(\frac{|x|}{1+|x|^2}\right)^{2\alpha}
\left(\frac{|y|}{1+|y|^2}\right)^{2\beta}
\left(\frac{2\langle x,y\rangle}{(1+|x|^2)(1+|y|^2)}\right)^{\gamma}
\\[2pt]
&\quad\times
\frac{n-2+2\gamma}{n-2}\,
\frac{\bigl(\tfrac{n-2}{2}\bigr)_{\gamma}}
     {\bigl(\tfrac{n}{2}\bigr)_{\alpha+\gamma}\bigl(\tfrac{n}{2}\bigr)_{\beta+\gamma}}\;
{}_4F_3\!\left(
\begin{matrix}
-\alpha,\; -\beta,\; \tfrac{n-2}{2}+\gamma,\; \tfrac{n-2+2\gamma}{4}+1 \\[2pt]
\tfrac{n}{2}+\alpha+\gamma,\; \tfrac{n}{2}+\beta+\gamma,\; \tfrac{n-2+2\gamma}{4}
\end{matrix}
;\,-1
\right).
\end{align*}
By \cite[p.~190, Eq.~(4)]{BatemanHTF}, 
$$
{}_4F_3\!\left(
\begin{matrix}
a,\; 1+\tfrac{a}{2},\; b,\; c \\[3pt]
\tfrac{a}{2},\; 1+a-b,\; 1+a-c
\end{matrix}
;\, -1
\right)
=
\frac{\Gamma(1+a-b)\,\Gamma(1+a-c)}
     {\Gamma(1+a)\,\Gamma(1+a-b-c)}.
$$
Applying this with \(a=\tfrac{n-2}{2}+\gamma\), \(b=-\alpha\), \(c=-\beta\), we obtain
$$
{}_4F_3\!\left(
\begin{matrix}
-\alpha,\; -\beta,\; \tfrac{n-2}{2}+\gamma,\; \tfrac{n-2+2\gamma}{4}+1 \\[3pt]
\tfrac{n}{2}+\alpha+\gamma,\; \tfrac{n}{2}+\beta+\gamma,\; \tfrac{n-2+2\gamma}{4}
\end{matrix}
;\, -1
\right)
=
\frac{
\Gamma\!\left(\tfrac{n}{2}+\alpha+\gamma\right)
\Gamma\!\left(\tfrac{n}{2}+\beta+\gamma\right)
}{
\Gamma\!\left(\tfrac{n}{2}+\gamma\right)
\Gamma\!\left(\tfrac{n}{2}+\alpha+\beta+\gamma\right)
}.
$$
Moreover, since
$$
\frac{\bigl(\tfrac{n}{2}-1\bigr)_{\gamma}}{\bigl(\tfrac{n}{2}\bigr)_{\gamma}}
= \frac{\tfrac{n}{2}-1}{\tfrac{n}{2}+\gamma-1},
$$
it follows that
\begin{align*}
K_h^{s}(x,y)
&=
\frac{(1-|x|^2)^{\,n-1}(1-|y|^2)^{\,n-1}}
     {(1+|x|^2)^{\,n-1}(1+|y|^2)^{\,n-1}}\\
&\times\sum_{\gamma,\alpha,\beta=0}^\infty
\frac{(n-1)_{2\alpha+\gamma}\,(n-1)_{2\beta+\gamma}}
     {\bigl(\tfrac{n}{2}\bigr)_{\alpha+\beta+\gamma}\,\alpha!\,\beta!\,\gamma!}
\left(\frac{|x|}{1+|x|^2}\right)^{2\alpha}
\left(\frac{|y|}{1+|y|^2}\right)^{2\beta}
\left(\frac{2\langle x,y\rangle}{(1+|x|^2)(1+|y|^2)}\right)^{\gamma}.
\\[2pt]
\end{align*}
By Corollary~\ref{cor:szego_kernel}, $K_h^{s}(x,y)=K_h(x,y)$, as expected.
\end{remark}

Here I would like to thank my supervisor Miroslav Engliš for introducing me to this topic and for his many valuable remarks, including Remark 5.2. I would also like to express my gratitude to Petr Blaschke for his help.\\Research was supported by GA CR grant no. 25-18042S.

\end{document}